\newtheorem{theorem}{Theorem}
\newtheorem{corollary}{Corollary}[theorem]
\newtheorem{definition}{Definition}
\newtheorem{assumption}{Assumption}
\newtheorem{remark}{Remark}
\newtheorem{example}{Example}
\title{\LARGE \bf
Distributed and Localized Closed Loop Model Predictive Control via System Level Synthesis
}
\author{Carmen Amo Alonso\thanks{C. Amo Alonso is a graduate student in the Computing and Mathematical Sciences Department at California Institute of Technology, Pasadena, CA.
        {\tt\small camoalon@caltech.edu}} \and Nikolai Matni
        \thanks{N. Matni is an Assistant Professor with the Department of Electrical and Systems Engineering at the University of Pennsylvania, Philadelphia, PA.    {\tt\small nmatni@seas.upenn.edu}}}%
\date{September 22, 2019, Revised: \today}
\begin{document}

\maketitle
\thispagestyle{empty}
\pagestyle{empty}


\begin{abstract}
We present the Distributed and Localized Model Predictive Control (DLMPC) algorithm for large-scale structured linear systems, a distributed \emph{closed loop} model predictive control scheme wherein only local state and model information needs to be exchanged between subsystems for the computation and implementation of control actions. We use the System Level Synthesis (SLS) framework to reformulate the centralized MPC problem as an optimization problem over closed loop system responses, and show that this allows us to naturally impose localized communication constraints between sub-controllers.
We show that the structure of the resulting optimization problem can be exploited to develop an Alternating Direction Method of Multipliers (ADMM) based algorithm that allows for distributed and localized computation of distributed closed loop control policies.  
We conclude with numerical simulations to demonstrate the usefulness of our method, in which we show that the computational complexity of the subproblems solved by each subsystem in DLMPC is independent of the size of the global system.  To the best of our knowledge, DLMPC is the first MPC algorithm that allows for the scalable distributed computation of distributed closed loop control policies.

\end{abstract}

\section{INTRODUCTION}

Model Predictive Control (MPC) has seen widespread success across many applications. However, the recent need to control increasingly large-scale, distributed, and networked systems has limited its applicability. Large-scale distributed systems are often impossible to control with a centralized controller, and moreover, even when such a centralized controller can be implemented, the high computational demand of MPC renders it impractical. Thus, efforts have been made to develop \emph{distributed} MPC (DMPC) algorithms, wherein sub-controllers solve a local optimization problem, and potentially coordinate with other sub-controllers.  

The majority of DMPC research has focused on open-loop approaches, which, following the discussion in \cite{conte_distributed_2016}, can be broadly categorized into non-cooperative  and cooperative settings.  
In the non-cooperative setting (see for example \cite{farina_distributed_2012}), sub-controllers do not coordinate their actions with each other, and treat other subsystems as disturbances: while computationally efficient, such approaches are known to be conservative, and can even lead to infeasible problems, when there is strong dynamic coupling between subsystems.  In the cooperative setting, sub-controllers exchange state and control action information in order to coordinate their behavior so as to optimize a global objective, typically through distributed optimization: see for example \cite{venkat_distributed_2008,zheng_networked_2013,giselsson_accelerated_2013,conte_distributed_2016,jalal_limited-communication_2017,wang_distributed_2015, venkat2005stability}.  
In order to make these nominal open loop approaches robust to additive disturbances, two broad approaches have been taken to generate closed loop policies.  

The first extends centralized robust MPC techniques that rely on a pre-computed stabilizing controller, such as constraint tightening and tube MPC, to the distributed setting.  While conceptually appealing and computationally efficient, they often rely on strong assumptions, such as the existence of a \emph{static structured} stabilizing controller, as in \cite{conte2013robust}, which can be NP-hard to compute \cite{blondel1997np}, or on dynamically decoupled subsystems, as in \cite{richards2007robust}.  The alternative approach, and that which is adopted in this paper, is to compute a \emph{dynamic} structured feedback policy using a suitable parameterization.  The first paper to propose such a strategy was the seminal paper by Goulart et al. \cite{goulart_optimization_2006}, where it was shown that using a disturbance based parameterization of the control policy allowed for distributed (structured) control policies to be synthesized using convex optimization.  A similar approach exploiting Quadratic Invariance \cite{rotkowitz_characterization_2006} and the Youla parameterization was also recently developed in \cite{furieri_robust_2017}.  While these methods allow for convex optimization to be used for the synthesis of distributed closed loop control policies, the resulting optimization problems lack the structure needed for them to be amenable to distributed optimization techniques, limiting their applicability to smaller scale systems.

Thus, the desiderata for a closed loop DMPC algorithm are that it allow for (i) structured feedback policies to be computed via convex optimization, and (ii) for this computation to be solvable at scale via distributed optimization techniques: to the best of our knowledge, no method satisfying both requirement exists. In this paper we address this gap and present the Distributed Localized MPC (DLMPC) algorithm for linear time-invariant systems, which allows for the distributed computation of structured feedback policies.  

We leverage the System Level Synthesis \cite{wang2019system,wang_separable_2018,anderson_system_2019} (SLS) framework to define a novel parameterization of distributed closed loop MPC policies such that the resulting synthesis problem is \emph{both convex and structured}, allowing for the natural use of distributed optimization techniques.  We show that by 
exploiting the sparsity of the underlying distributed system and resulting closed loop system, as well as the separability properties \cite{wang_separable_2018} of often used objective functions and constraints (e.g., quadratic costs subject to polytopic constraints), we are able to distribute the computation via the Alternating Direction Method of Multipliers (ADMM), thus allowing for the online computation of closed loop MPC policies to be done in a scalable localized manner. Hence, in the resulting implementation each sub-controller solves a low-dimensional optimization problem defined over its local neighborhood, requiring only local communication of state and model information.  We show that so long as certain localizability properties are satisfied, no approximations are needed, and that under standard regularity assumptions, the algorithm converges to the globally optimal solution. Furthermore, we show that convex constraints and cost functions that couple neighboring subsystems can be dealt with via a consensus-like algorithm.  
Through numerical experiments, we further confirm that the complexity of the subproblems solved at each subsystem scales as $O(1)$ relative to the full size of the system.

\subsubsection*{\textbf{Notation}}

Bracketed indices denote the time of the true system, i.e., $x(t)$ denotes the system state at time $t$, subscripts denote prediction time indices within an MPC loop, i.e., $x_t$ denotes the $t$th predicted state, and superscripts denote iteration steps of an optimization algorithm, i.e., $x_t^{k}$ is the value of the $t^{\text{th}}$ predicted state at iterate $k$. To denote subsystem variables, we use square bracket notation, i.e., $[x]_{i}$ denotes the components of $x$ corresponding to subsystem $i$.  Calligraphic letters such as $\mathcal{S}$ denote sets, and lowercase script letters such as $\mathfrak{c}$ denotes a subset of $\mathbb{Z}^{+}$, i.e., $\mathfrak{c}=\left\{1,...,n\right\}\subset\mathbb{Z}^{+}$.  Boldface lower and upper case letters such as $\mathbf{x}$ and $\mathbf{K}$ denote finite horizon signals and lower block triangular (causal) operators, respectively:
\begin{equation*} 
\mathbf{x}=\left[\begin{array}{c} x_{0}\\x_{1}\\\vdots\\x_{T}\end{array}\right],\
    \mathbf{K}=\left[\begin{array}{cccc}K^{0,0} & & & \\ K^{1,1} & K^{1,0} & & \\ \vdots & \ddots & \ddots & \\ K^{T,T} & \dots & K^{T,1} & K^{T,0} \end{array}\right],
\end{equation*}
where each $K^{i,j}$ is a matrix of compatible dimension. 
  $\mathbf{K}(\mathfrak{r},\mathfrak{c})$ denotes the submatrix of $\mathbf{K}$ composed of the rows specified by $\mathfrak{r}$ and the columns specified by $\mathfrak{c}$. 

\section{PROBLEM FORMULATION}

Consider a discrete-time linear time invariant (LTI) system
\begin{equation} \label{eq: LTV system}
x(t+1) = Ax(t)+Bu(t)+w(t),
\end{equation}
where $x(t)\in\mathbb{R}^{n}$ is the state, $u(t)\in\mathbb{R}^{p}$ is the control input, and $w(t)\in\mathbb{R}^{n}$ is an exogenous disturbance. The system is composed of $N$ interconnected subsystems, as defined by an interconnection topology -- correspondingly, the state, control, and disturbance inputs can be suitably partitioned as $[x]_i$, $[u]_i$, and $[w]_i$, inducing a compatible block structure $[A]_{ij}$, $[B]_{ij}$ in the dynamics matrices $(A,B)$.   We model the interconnection topology of the system as a time-invariant
\footnote{Although we restrict both the dynamics and interconnection topology to be time-invariant, we believe that an extension to time-varying dynamics and topologies will be straightforward as long as the corresponding communication topology varies consistently with the physical topology. We leave this extension for future work.}
unweighted directed graph $\mathcal{G}_{(A,B)}(E,V)$, where each subsystem $i$ is identified with a vertex $v_{i}\in V$ and an edge $e_{ij}\in E$ exists whenever $[A]_{ij}\neq 0$ or $[B]_{ij}\neq 0$.

\begin{example}{\label{ex: chain graph}}
Consider the linear time-invariant system structured as a chain topology as shown in Figure \ref{fig:chain}. Each subsystem $i$ is subject to the dynamics
\begin{equation*}
[x(t+1)]_{i} = \sum_{j\in\left\{i,i\pm 1\right\}}[A]_{ij}[x(t)]_{j}+[B]_{ii}[u(t)]_{i}+[w(t)]_{i}.
\end{equation*}

As $B$ is a diagonal matrix, coupling between subsystems is defined by the $A$ matrix -- thus, the adjacency matrix of the corresponding graph $\mathcal{G}$ coincides with the support of $A$.
\begin{figure}[htp]
\centering
\includegraphics[width=11cm, page=1]{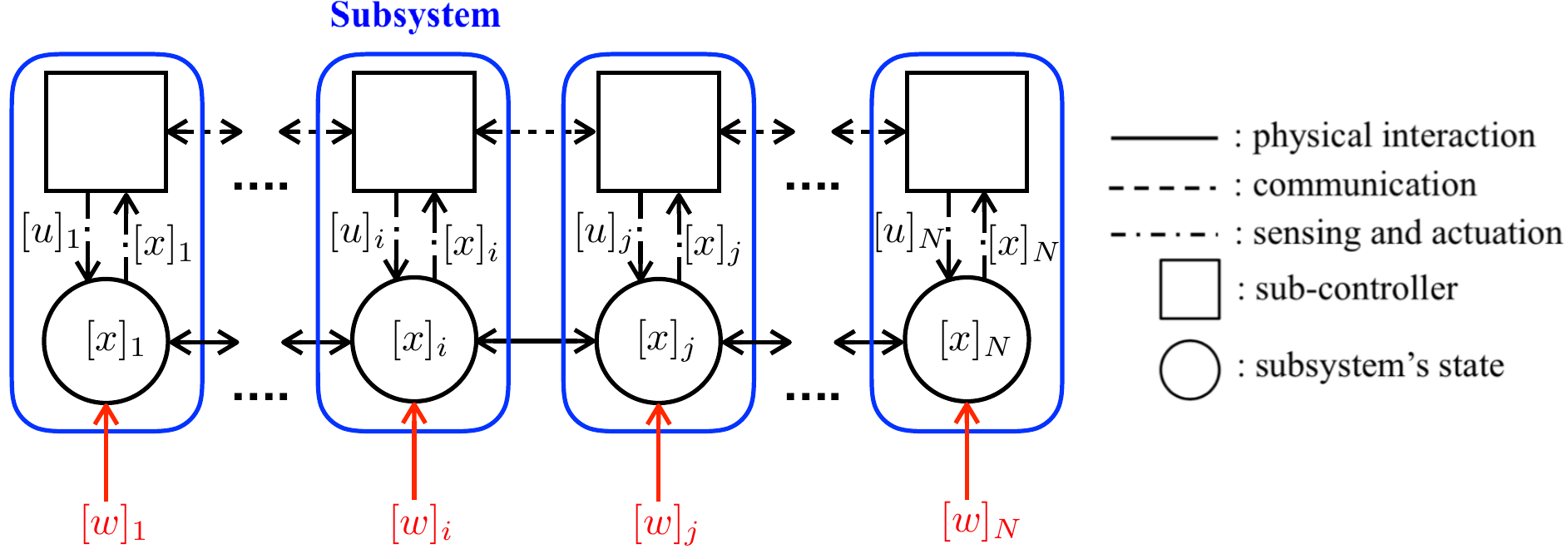}
\caption{Schematic representation of a system with a chain topology.}
\label{fig:chain}
\end{figure}

\end{example} 

As is standard, a model predictive controller is implemented by solving a series of finite horizon optimal control problems, with the problem solved at time $\tau$ with initial condition $x_0=x(\tau)$ over a prediction horizon $T$ given by: 
\begin{equation} \label{eq: MPC}
\begin{aligned}
& \underset{{x}_{t},u_{t}, \gamma_t}{\text{min.}} &  &\sum_{t=0}^{T-1}f_{t}(x_{t},u_{t})+f_{T}(x_{T})\\
& \ \text{s.t.} &  &\begin{aligned} 
    & x_{0} = x(\tau),\, \ x_{t+1} = Ax_{t}+Bu_{t},\, \ t=0,...,T-1,\\
    & x_{T}\in\mathcal{X}_{T},\, x_{t}\in\mathcal{X}_{t},\, u_{t}\in\mathcal{U}_{t} \, \ t=0,...,T-1, \\
    &u_{t} = \gamma_t(x_{0:t},u_{0:t-1}),
\end{aligned}
\end{aligned}
\end{equation}
where the $f_t(\cdot,\cdot)$ and $f_T(\cdot)$ are convex cost functions, $\mathcal{X}_t$ and $\mathcal{U}_t$ are convex sets containing the origin, and $\gamma_t(\cdot)$ are measurable functions of their arguments.

Our goal is to define a MPC algorithm that respects local communication constraints between sub-controllers, and has only local-scale computational complexity when computing and subsequently implementing distributed closed loop control policies.  In what follows, we formally define appropriate notions of locality in terms of the interconnection topology graph $\mathcal G_{(A,B)}$ of the underlying physical system, and relate these to the corresponding constraints that they impose on the MPC problem \eqref{eq: MPC}.

We assume that the information exchange topology between sub-controllers matches that of the underlying system, i.e., that it is given by $\mathcal{G}_{(A,B)}$, and we further impose that information exchange be \emph{localized} to a subset of neighboring sub-controllers.  In particular, we use the notion of a $d$-local information exchange constraint \cite{wang2014localized,wang2016localized} to be one that restricts sub-controllers to exchange their state and control actions with neighbors at most $d$-hops away, as measured by the communication topology $\mathcal{G}_{(A,B)}$. This notion is captured by the $d$-outgoing and $d$-incoming sets of subsystem.
\begin{definition}{\label{def: in_out set}}
For a graph $\mathcal{G}(V,E)$, the \textit{d-outgoing set} of subsystem $i$ is $\textbf{out}_{i}(d) := \left\{v_{j}\ |\  \textbf{dist}(v_{i} \rightarrow v_{j} ) \leq d\in\mathbb{N} \right\}$. The \textit{d-incoming set} of subsystem $i$ is $\textbf{in}_{i}(d) := \left\{v_{j}\ |\ \textbf{dist}(v_{j} \rightarrow v_{i} ) \leq d\in\mathbb{N} \right\}$.  Note that $v_i \in \textbf{out}_{i}(d)\cap \textbf{in}_{i}(d)$ for all $d\geq 0$. 
\end{definition}

Hence, we can enforce a $d$-local information exchange constraint on the distributed MPC problem \eqref{eq: MPC} by imposing the constraint that each sub-controllers policy respects 
\begin{equation}
    [u_t]_i = \gamma_{i,t}\left([x_{0:t}]_{j\in \textbf{in}_i(d)},[u_{0:t-1}]_{j\in \textbf{in}_i(d)},
    [A]_{j,k \in \textbf{in}_i(d)},[B]_{j,k\in  \textbf{in}_i(d)}\}\right),
    \label{eq:info-constraints}
\end{equation}
for all $t=0,\dots,T$ and $i=1,\dots,N$, where $\gamma_{i,t}$ is a measurable function of its arguments.  In words, this says that the closed loop control policy at sub-controller $i$ can be computed using only states, control actions, and system models collected from $d$-hop incoming neighbors of subsystem $i$ in the communication topology $\mathcal{G}_{(A,B)}.$

\begin{example}\label{ex: in_out} Consider a system \eqref{eq: LTV system} composed of $N=6$ scalar subsystems, with $B = I_6$ and $A$ matrix with support represented in Figure \ref{fig: in_out}(a).
This induces the interconnection topology graph $\mathcal G_{(A,B)}$ illustrated in Figure \ref{fig: in_out}(b). The $d$-incoming and $d$-outgoing sets can be directly read off from the interaction topology.  For example, for $d=1$, the $1$-hop incoming neighbors for subsystem $5$ are subsystems $3$ and $4$, hence $\textbf{in}_{5}(1)=\{3,4,5\}$; similarly, $\textbf{out}_{5}(1)=\{4,5,6\}$.

\begin{figure}[h]
    \centering
    \includegraphics[width=12cm, page=2]{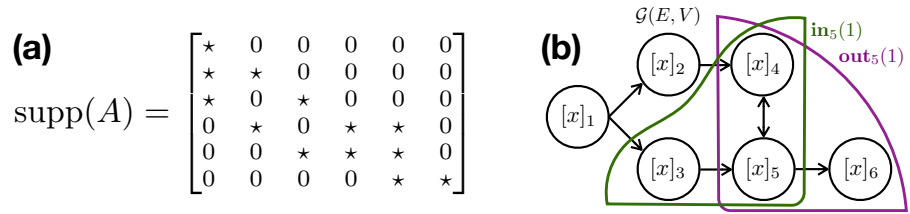}
    \caption{{(a) Support of matrix $A$. (b) Example of $1$-incoming and $1$-outgoing sets for subsystem $5$.}}
    \label{fig: in_out}
\end{figure}
\end{example}

Given such an interconnection topology, it would be desirable to be able to specify that both the synthesis and implementation of a control action at each subsystem be localized, i.e., depend only on state, control action, and plant model information from $d$-hop neighbors, where the size of the local neighborhood $d$ is a \emph{design parameter}.  We will show that, 
 structural compatibility assumptions between the cost function, state and input constraints, and information exchange constraints, DLMPC allows for precisely this by imposing appropriate $d$-local structural constraints on the \emph{closed loop system responses} of the system.  We will make clear that the localized region parameter $d$ allows for a principled trade-off between the amount of coordination allowed between sub-controllers and the computational complexity of the distributed MPC controller.  To do so, we leverage the SLS framework to reformulate the MPC problem \eqref{eq: MPC}. 

\section{SYSTEM LEVEL SYNTHESIS BASED DLMPC\label{section2}}

We first introduce relevant tools from the SLS framework\cite{wang2019system,wang_separable_2018,anderson_system_2019}, and show how SLS naturally allows for locality constraints \cite{wang_separable_2018,wang2014localized,wang2016localized} to be imposed on the system responses and corresponding controller implementation.

\subsection{Time Domain System Level Synthesis}

The following is adapted from $\S2$ of \cite{anderson_system_2019}.  Consider the dynamics of system (\ref{eq: LTV system}) evolving over a finite horizon $t = 0,...T$, and let $u_t$ be a causal linear time-varying state-feedback controller, i.e., $u_t=K_t(x_0,x_1,...,x_t)$ where $K_t$ is some linear map to be designed.\footnote{Our assumption of a linear policy is without loss of generality, as an affine control policy $u_t = K_t(x_{0:t}) + v_t$ can always be written as a linear policy acting on the homogenized state $\tilde{x} = [x;1]$.} Let $Z$ be the block-downshift matrix, i.e., a matrix with identity matrices along its first block sub-diagonal and zeros elsewhere, and define $\hat A:=\mathrm{blkdiag}(A,A,...,A,0)$ and $\hat B:=\mathrm{blkdiag}(B,B,...,B,0)$. This allows us to write the behavior of system (\ref{eq: LTV system}) over the horizon $t=0,...,T$ as 
\begin{equation} 
\mathbf{x} = Z(\hat A+ \hat B\mathbf{K})\mathbf{x+\mathbf{w}},
\label{eq: closed loop}
\end{equation}
where $\mathbf{x}$, $\mathbf{u}$ and $\mathbf{w}$ are the finite horizon signals corresponding to state, control input, and disturbance respectively. In particular, the initial condition $x_{0}$ is embedded as the first element of the disturbance, i.e., $\mathbf{w} = [x_{0}^\mathsf{T}\ w_{0}^\mathsf{T}\ \dots \ w_{T-1}^\mathsf{T}]^\mathsf{T}$.

The closed loop behavior of system (\ref{eq: LTV system}) under the feedback law $\mathbf{K}$ can be entirely characterized by {the system responses $\mathbf \Phi_x$ and $\mathbf \Phi_u$}
\begin{equation} \label{eq: closed loop A&B}
\begin{split}
\mathbf{x} & = (I-Z(\hat A+\hat B\mathbf{K}))^{-1}\mathbf{w} =: \mathbf\Phi_x \mathbf w\\
\mathbf{u} & = \mathbf{K}(I-Z(\hat A+\hat B\mathbf{K}))^{-1}\mathbf{w} =: \mathbf\Phi_u \mathbf w.
\end{split}
\end{equation}

The approach taken by SLS is to directly parameterize and optimize over the set of achievable closed loop maps (\ref{eq: closed loop A&B}) from the exogenous disturbance $\mathbf{w}$ to the state $\mathbf{x}$ and the control input $\mathbf{u}$, respectively.
\begin{theorem}{\label{thm: SLS}}
For the dynamics (\ref{eq: LTV system}) evolving under the state-feedback policy $\mathbf u = \mathbf K \mathbf x$, for $\mathbf{K}$ a block-lower-triangular matrix, the following are true
\begin{enumerate}
    \item The affine subspace of block lower-triangular $\{\mathbf{\Phi}_{x},\mathbf{\Phi}_{u}\}$
    \begin{equation}\label{eq: constraint}
        \left[I-Z\hat A\ \ -Z\hat B\right]\left[\begin{array}{c}\mathbf{\Phi}_{x}\\\mathbf{\Phi}_{u}\end{array}\right] = I
    \end{equation}
    parameterizes all possible system responses \eqref{eq: closed loop A&B}.
    
    \item For any block-lower-triangular matrices $\left\{\mathbf{\Phi}_{x},\mathbf{\Phi}_{u}\right\}$ satisfying (\ref{eq: constraint}), the controller $\mathbf{K} = \mathbf{\Phi}_{u}\mathbf{\Phi}_{x}^{-1}$ achieves the desired response \eqref{eq: closed loop A&B} from $\mathbf w \mapsto (\mathbf x,\mathbf u)$.
\end{enumerate}
\end{theorem}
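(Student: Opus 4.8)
The plan is to establish both claims by direct algebraic manipulation of the relation \eqref{eq: closed loop A&B} and the constraint \eqref{eq: constraint}, with the single enabling structural fact being that any matrix premultiplied by the block-downshift $Z$ is \emph{strictly} block lower-triangular. This observation is what guarantees that the matrix inverses appearing throughout are well-defined, and it drives every step.

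First I would prove the forward (achievability) direction of claim~1. Given a block-lower-triangular $\mathbf{K}$, the product $Z(\hat A + \hat B \mathbf{K})$ is strictly block lower-triangular, so $I - Z(\hat A + \hat B \mathbf{K})$ has identity blocks on its diagonal and is therefore invertible, with a block-lower-triangular inverse; this makes $\mathbf{\Phi}_x = (I - Z(\hat A + \hat B \mathbf{K}))^{-1}$ well-defined and block lower-triangular. Multiplying this identity through by $(I - Z(\hat A + \hat B \mathbf{K}))$ gives $\mathbf{\Phi}_x - Z\hat A \mathbf{\Phi}_x - Z\hat B \mathbf{K}\mathbf{\Phi}_x = I$, and substituting $\mathbf{\Phi}_u = \mathbf{K}\mathbf{\Phi}_x$ rearranges this immediately into the affine constraint \eqref{eq: constraint}. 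Block-lower-triangularity of $\mathbf{\Phi}_u$ then follows because it is a product of block-lower-triangular factors $\mathbf{K}$ and $\mathbf{\Phi}_x$.

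Next I would prove the converse of claim~1 together with claim~2. Taking any block-lower-triangular $\{\mathbf{\Phi}_x, \mathbf{\Phi}_u\}$ satisfying \eqref{eq: constraint}, the crux is to show $\mathbf{\Phi}_x$ is invertible. Rewriting \eqref{eq: constraint} as $(I - Z\hat A)\mathbf{\Phi}_x - Z\hat B \mathbf{\Phi}_u = I$ and inspecting the block diagonal, both $Z\hat A \mathbf{\Phi}_x$ and $Z\hat B \mathbf{\Phi}_u$ are strictly block lower-triangular, so the block diagonal of the left-hand side equals that of $\mathbf{\Phi}_x$; equating to $I$ forces every diagonal block of $\mathbf{\Phi}_x$ to be the identity, hence $\mathbf{\Phi}_x$ is invertible. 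I can then set $\mathbf{K} := \mathbf{\Phi}_u \mathbf{\Phi}_x^{-1}$, substitute $\mathbf{\Phi}_u = \mathbf{K}\mathbf{\Phi}_x$ into \eqref{eq: constraint}, and factor $\mathbf{\Phi}_x$ on the right to obtain $(I - Z(\hat A + \hat B \mathbf{K}))\mathbf{\Phi}_x = I$. This identifies $\mathbf{\Phi}_x$ with the closed-loop map $(I - Z(\hat A + \hat B \mathbf{K}))^{-1}$ of \eqref{eq: closed loop A&B}, and $\mathbf{\Phi}_u = \mathbf{K}\mathbf{\Phi}_x$ with the corresponding input map, so $\mathbf{K}$ realizes exactly the prescribed responses.

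The main obstacle I anticipate is precisely the invertibility argument for $\mathbf{\Phi}_x$ in the converse direction: without it, the candidate controller $\mathbf{K} = \mathbf{\Phi}_u \mathbf{\Phi}_x^{-1}$ is not even defined. The resolution is the structural observation above, which uses only that $Z$ is the block-downshift to pin the diagonal blocks of $\mathbf{\Phi}_x$ to the identity. Once this is in hand, the remaining manipulations are routine rearrangements, and the two directions fit together to show that \eqref{eq: constraint} exactly parameterizes the achievable responses \eqref{eq: closed loop A&B}.
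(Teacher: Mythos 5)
Your proof is correct and takes essentially the same route as the proof the paper relies on (it cites Theorem 2.1 of \cite{anderson_system_2019} rather than proving the result itself): direct substitution of $\mathbf{\Phi}_u = \mathbf{K}\mathbf{\Phi}_x$ into the closed-loop identity for achievability, and for the converse the key observation that $Z\hat A\mathbf{\Phi}_x$ and $Z\hat B\mathbf{\Phi}_u$ are strictly block lower-triangular, pinning the diagonal blocks of $\mathbf{\Phi}_x$ to the identity so that $\mathbf{K}=\mathbf{\Phi}_u\mathbf{\Phi}_x^{-1}$ is well-defined (and block lower-triangular) and recovers the prescribed responses. Your write-up is a faithful self-contained reconstruction of that standard argument, with no gaps.
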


\begin{proof}See Theorem 2.1 of \cite{anderson_system_2019}.\end{proof}

Theorem \ref{thm: SLS} allows us to reformulate an optimal control problem over state and input pairs $(\mathbf x, \mathbf u)$ as an equivalent one over system responses $\{\mathbf \Phi_x, \mathbf \Phi_u\}$ -- a detailed description of how to do this for several standard control problems is provided in $\S2$ of \cite{anderson_system_2019}. For the MPC subproblem \eqref{eq: MPC}, as no driving noise is present, we only have to account for the system response to the initial condition $x_0$, i.e., $\mathbf{w}=[x_{0}^{\mathsf{T}},0,...,0]^{\mathsf{T}}$. Hence, by equation \eqref{eq: closed loop A&B}, $\mathbf{x}=\mathbf{\Phi}_{x}[0]x_{0}$ and $\mathbf{u}=\mathbf{\Phi}_{u}[0]x_{0}$, where $\mathbf\Phi_x[0]$ and $\mathbf\Phi_u[0]$ denote the first block column of the block lower triangular response matrices $\mathbf\Phi_x$ and $\mathbf\Phi_u$ -- in the sequel, we will sometimes abuse notation and write $\mathbf\Phi_x[0] = \mathbf\Phi_x$, $\mathbf\Phi_u[0] = \mathbf\Phi_u$, as in the absence of driving noise, only the first block columns of $\mathbf \Phi_x,\mathbf \Phi_u$ need to be computed. We can rewrite the MPC subproblem \eqref{eq: MPC} as
\begin{equation}\label{eq: SLS}
\begin{array}{rl}
\underset{\mathbf{\Phi}_{x},\mathbf{\Phi}_{u}}{\text{min}} & f(\mathbf{\Phi}_{x}x_{0},\mathbf{\Phi}_{u}x_{0})\\
 \text{s.t.} &  Z_{AB}\mathbf{\Phi}=I,\, x_0 = x(t),\, \mathbf{\Phi}_{x}x_{0}\in \mathcal{X}^T, \mathbf{\Phi}_{u}x_{0} \in\mathcal{U}^T,
\end{array}
\end{equation}
where we use $ Z_{AB}\mathbf{\Phi}=I$ to compactly denote constraint \eqref{eq: constraint}, $\mathcal{X}^T := \otimes_{t=0}^{T-1} \mathcal{X}_t \otimes \mathcal{X}_T$, and similarly for $\mathcal{U}^T$, and $f$ is suitably defined such that it is consistent with the objective function of problem \eqref{eq: MPC}.  

To see why optimization problem \eqref{eq: SLS} is equivalent to the original MPC problem \eqref{eq: MPC}, it suffices to notice that for the noise free setting we consider in this paper, for a fixed initial condition $x_0$ any control sequence $\boldsymbol{u}(x_0) := [u_0^\top,\dots,u_{T-1}^\top]^\top$ can be achieved by a suitable choice of feedback matrix $\boldsymbol K(x_0)$ such that $\boldsymbol{u}(x_0) = \boldsymbol K(x_0)[0]x_0$ (that such a matrix always exists follows from a simple dimension counting argument).  As this control action can be achieved by a linear-time-varying controller $\boldsymbol K(x_0)$, Theorem \ref{thm: SLS} states that there exists a corresponding achievable system response pair $\left\{\mathbf{\Phi}_{x},\mathbf{\Phi}_{u}\right\}$ such that $\boldsymbol{u}(x_0) = \boldsymbol \Phi_u[0]x_0$.  This is simply a restatement in the SLS parameterization of the well known fact that LTV controllers are as expressive as nonlinear controllers over a finite horizon, given a fixed initial condition and noise realization (which in this case is set to zero). Thus the SLS reformulation introduces no conservatism relative to open-loop MPC in the nominal (disturbance free) setting. We defer discussion of the closed loop setting to the end of this section, where we show that the disturbance based parametrization \cite{goulart_optimization_2006} is as a special case of ours. 

\textbf{Why use SLS for Distributed MPC:} In the centralized setting, where both the system matrices $(A,B)$ and the system responses $\left\{\mathbf{\Phi}_{x},\mathbf{\Phi}_{u}\right\}$ are dense, the SLS parameterized problem \eqref{eq: SLS} is slightly more computationally costly than the original MPC problem \eqref{eq: MPC}, as there are now $n(n+p)T$ decision variables, as opposed to $(n+p)T$ decision variables.  However, under suitable localized structural assumptions on the objective functions $f_t$ and constraint sets $\mathcal X^T$ and $\mathcal U^T$, that by lifting to this higher dimensional parameterization, the structure of the underlying system - as captured by the interconnection topology $\mathcal G_{(A,B)}$ - can be fully exploited. In particular, the interplay of the high dimensional parametrization and the system structure allows for not only the convex synthesis of a distributed closed loop control policy (as is similarly done in \cite{goulart_optimization_2006,furieri_robust_2017}), but also for the solution of this convex synthesis problem to be computed using distributed optimization.

This latter feature is one of our main contributions, and in particular, we show that the resulting number of optimization variables in the local subproblems solved at each sub-system scales as $O(d^2T)$, and hence is independent of the global system size.  To the best of our knowledge, this is the first distributed closed loop MPC algorithm that enjoys such properties.

\subsection{Locality in System Level Synthesis} 

We begin by commenting on controller implementation.  Given a pair of achievable system responses $\left\{\mathbf{\Phi}_{x},\mathbf{\Phi}_{u}\right\}$ satisfying the affine constraint \eqref{eq: constraint}, the control law achieving the desired system behavior can be implemented as
\begin{equation}{\label{eq: implementation}}
\begin{aligned}
        \mathbf{u}=\mathbf{\Phi}_u\mathbf{\hat{w}},\ \ \  \mathbf{\hat{x}}=(\mathbf{\Phi}_x-I)\mathbf{\hat{w}},\ \ \ 
        \mathbf{\hat{w}}=\mathbf{x}-\mathbf{\hat{x}},
\end{aligned}
\end{equation}
where $\mathbf{\hat{x}}$ can be interpreted as a nominal state trajectory, and $\mathbf{\hat{w}}=Z\mathbf{w}$ is a delayed reconstruction of the disturbance.  Note that $\Phi_x -I$ is strictly lower block triangular (i.e., strictly causal), and thus the implementation defined in \eqref{eq: implementation}  is well posed \cite{anderson_system_2019}. We further note that in the simplified setting of no driving noise, this implementation reduces to $\mathbf u = \mathbf \Phi_u[0] x_0 $. The advantage of this controller implementation, as opposed to  $\mathbf u = \mathbf \Phi_u \mathbf \Phi_x^{-1} \mathbf x$, is that any structure imposed on the maps $\{\mathbf\Phi_u, \mathbf \Phi_x\}$ translates directly to structure on controller implementation \eqref{eq: implementation}, naturally allowing for information exchange constraints to be imposed by imposing suitable sparsity structure on the responses $\left\{\mathbf{\Phi}_{x},\mathbf{\Phi}_{u}\right\}$.

We now show that imposing $d$-local structure on the system responses, coupled with an assumption of compatible $d$-local structure on the objective functions and constraints of the MPC problem \eqref{eq: MPC}, leads to a \emph{structured} SLS MPC optimization problem \eqref{eq: SLS}.  We then develop an ADMM based distributed solution to this problem in Section \ref{sec:admm}.  We emphasize that while the results of \cite{goulart_optimization_2006, furieri_robust_2017} allow for similar structural constraints to be imposed on the controller realization through the use of either disturbance feedback or Youla parameterizations (subject to Quadratic Invariance \cite{rotkowitz_characterization_2006} conditions), the resulting synthesis problems do not enjoy the structure needed for distributed optimization techniques to be effective, thus limiting their usefulness to smaller scale examples where centralized computation of policies is feasible.  We return to argue this point more formally at the end of this section, after introducing the necessary concepts.

We begin by defining the notion of $d$-localized system responses, which follows naturally from the notion of $d$-local information exchange constraints. They consist of system responses with suitable sparsity patterns such that the information exchange needed between subsystems to implement the controller realization \eqref{eq: implementation} is limited to $d$-hop incoming and outgoing neighbors, as defined by the topology $\mathcal G_{(A,B)}$. 

\begin{definition}{\label{def: locality}}
Let $[\mathbf{\Phi}_{x}]_{ij}$ be the submatrix of system response $\mathbf{\Phi}_x$ describing the map from disturbance $[w]_{j}$ to the state $[x]_i$ of subsystem $i$. The map $\mathbf{\Phi}_{x}$ is \textit{d-localized} if and only if for every subsystem $j$, $[\mathbf{\Phi}_{x}]_{ij}=0\ \forall\ i\not\in\textbf{out}_{j}(d)$. The definition for \textit{d-localized} $\mathbf{\Phi}_u$ is analogous but with perturbations to control action $[u]_i$ at subsystem $i$.
\end{definition}

It follows immediately from the controller implementation (\ref{eq: implementation}) that if the system responses are $d$-localized, then so is the controller implementation. In particular, by enforcing $d$-localized structure on $\mathbf{\Phi}_{x}$, only a corresponding local subset $[\hat{\mathbf{w}}]_{j\in\textbf{in}_i(d)}$ of $\hat{\mathbf{w}}$ are necessary for subsystem $i$ to compute its local disturbance estimate $[\hat{\mathbf{w}}]_{i}$, which ultimately means that only local communication is required to reconstruct the relevant disturbances for each subsystem. Similarly, if $d$-localized structure is imposed on $\mathbf{\Phi}_{u}$, then only a local subset $[\hat{\mathbf{w}}]_{j\in\textbf{in}_i(d)}$ of the estimated disturbances $[\hat{\mathbf{w}}]$ is needed for each subsystem to compute its control action $[\mathbf u]_i$. Hence, each subsystem only needs to collect information from its $d$-incoming set to implement the control law defined by \eqref{eq: implementation}, and similarly, only needs to share information with its $d$-outgoing set to allow for other subsystems to implement their respective control laws.
Furthermore, such locality constraints are transparently enforced as additional subspace constraints in the SLS formulation (\ref{eq: SLS}).
\begin{definition}{\label{def: locality constraints}}
A subspace $\mathcal{L}_d$ enforces a $d$\textit{-locality constraint} if $\mathbf{\Phi}_{x},\mathbf{\Phi}_{u}\in\mathcal{L}_d$ implies that $\mathbf{\Phi}_{x}$ is $d$-localized and $\mathbf{\Phi}_{u}$ is $(d+1)$-localized\footnote{Notice that we are imposing $\mathbf{\Phi}_{u}$ to be $(d+1)$-localized because in order to localize the effects of a disturbance within the region of size $d$, the ``boundary'' controllers at distance $d+1$ must take action (for more details the reader is referred to \cite{anderson_system_2019}).}.  A system $(A,B)$ is then $d$-localizable if the intersection of $\mathcal{L}_d$ with the affine space of achievable system responses \eqref{eq: constraint} is non-empty\footnote{This can be interpreted as a spatio-temporal generalization of controllability. Interested readers are referred to \cite{wang2019system,wang_localized_2014}, where the analogy is made formal.}.
\end{definition}

Although $d$-locality constraints are always convex subspace constraints, not all systems are $d$-localizable.  As we describe in Section \ref{sec:admm}, the locality diameter $d$ can be viewed as a design parameter, and for the remainder of the paper, we assume that there exists a $d<<N$ such that the system $(A,B)$ to be controlled is $d$-localizable.  We emphasize two facts.  We emphasize two facts. First, the parameter $d$ is tuned independently of the horizon $T$, and captures how ``far'' in the interconnection topology a disturbance striking a subsystem is allowed to spread -- as described in detail in \cite{wang2014localized,wang2016localized,wang2019system}, localized control can be thought of as a spatio-temporal generalization of deadbeat control.  Second, although beyond the scope of this paper, all of the presented results extend naturally to systems that are approximately localizable using a robust variant of the SLS parameterization described in \cite{matni_scalable_2017,anderson_system_2019}.

\begin{example}{\label{ex: chain implementation}}
Consider the chain in Example \ref{ex: chain graph}, and suppose that we enforce a $1$-locality constraint on the system responses: then $\mathbf \Phi_x$ is $1$-localized and $\mathbf \Phi_u$ is $2$-localized.  Due to the chain topology, this is equivalent to enforcing a tridiagonal structure on $\mathbf \Phi_x$ and a pentadiagonal structure on $\mathbf \Phi_u$.  The resulting $1$-outgoing and $2$-incoming sets at node $i$ are then given by $\textbf{out}_{i}(1)=\left\{i-1,i,i+1\right\}$ and $\textbf{in}_{i}(2)=\left\{i-2,i-1,i,i+1,i+2\right\}$, as illustrated in Figure \ref{fig: chain2}.
\begin{figure}[H]
    \centering
      \includegraphics[width=10cm, page=3]{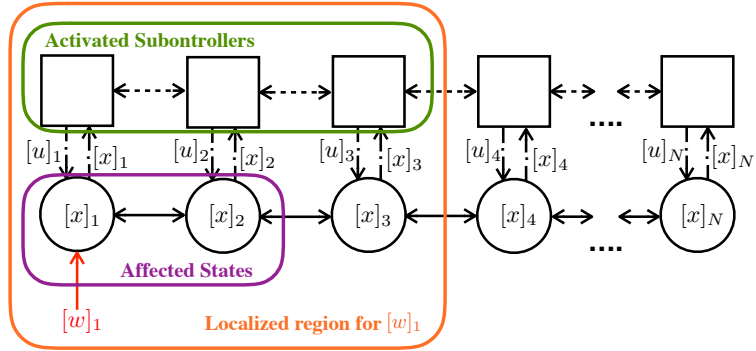}
    \caption{Graphical representation of the scenario described in Example \ref{ex: chain implementation}.}
    \label{fig: chain2}
\end{figure}
\end{example}

Finally, we introduce the necessary compatibility assumptions between the cost functions, state and input constraints, and $d$-local information exchange constraints.

\begin{assumption}{\label{assump: locality}}
The objective function $f_{t}$ in formulation (\ref{eq: MPC}) is such that $f_{t}(x)=\sum f_{t_{i}}([x]_{j\in\textbf{in}_{i}(d)},[u]_{j\in\textbf{in}_{i}(d)})$. The constrain sets in formulation (\ref{eq: MPC}) are such that $x\in\mathcal{X}=\mathcal{X}_1\times ... \times \mathcal{X}_n$, where $x \in \mathcal{X}$ if and only if $[x]_{j\in\textbf{in}_{i}(d)}\in\mathcal{X}_{i}$ for all $i$, and idem for $\mathcal{U}$.
\end{assumption}

Assumption \ref{assump: locality} imposes that whenever two subsystems are coupled through either the constraints or the objective function, they then must be within the $d$-local regions, as defined by their corresponding $d$-incoming and $d$-outgoing sets, of one another. This is a natural assumption for large structured networks where couplings between subsystems tend to occur at a local scale.  

We can formulate the DLMPC subproblem by incorporating locality constraints into the SLS MPC subproblem \eqref{eq: SLS}.
\begin{equation}\label{eq: SLS MPC}
\begin{array}{rl}
\underset{\mathbf{\Phi}_{x},\mathbf{\Phi}_{u}}{\text{min}} & \sum_{i=1}^N f^i([\mathbf{\Phi}_{x}x_0]_{j\in\textbf{in}_{i}(d)},[\mathbf{\Phi}_{u}x_0]_{j\in\textbf{in}_{i}(d)})\\
\text{s.t.} & [\mathbf{\Phi}_{x}x_0]_{j\in\textbf{in}_{i}(d)}\in\mathcal{X}_{i},\ [\mathbf{\Phi}_{u}x_0]_{j\in\textbf{in}_{i}(d)}\in\mathcal{U}_{i},\, i=1,\dots,N,\\
& Z_{AB}\mathbf{\Phi}=I,\, x_0 = x(t),\,\mathbf{\Phi}_{x},\mathbf{\Phi}_{u}\in\mathcal{L}_{d},  
\end{array}
\end{equation}
where the $f^i$ are defined so as to be compatible with the decomposition defined in Assumption \ref{assump: locality}.

 While it was not obvious how to impose locality constraints on information exchange in the original formulation of the MPC subproblem (\ref{eq: MPC}), it is straightforward to do so via the locality constraints $\mathbf{\Phi}_{x},\mathbf{\Phi}_{u}\in\mathcal{L}_{d}$ in formulation (\ref{eq: SLS MPC}). As these locality constraints are defined in terms of the $d$-hop incoming and outgoing sets of the interconnection topology of the system $\mathcal{G}_{(A,B)}$, the structure imposed on the system responses $\{\mathbf\Phi_u, \mathbf \Phi_x\}$ will be compatible with the structure of the matrix $Z_{AB}$ defining the affine constraint \eqref{eq: constraint}.  This structural compatibility in \emph{all optimization variables, cost functions, and constraints} is the key feature that we exploit to apply distributed optimization techniques in the next section to scalably and \emph{exactly} solve problem \eqref{eq: SLS MPC}.
 
 \begin{remark} Note that although $d$-locality constraints can always be imposed as convex subspace constraints, not all systems are $d$-localizable.  As we describe in the sequel, the locality diameter $d$ can be viewed as design parameter, and for the remainder of the paper, we assume that there exists a $d<<N$ such that the system $(A,B)$ to be controlled is $d$-localizable.  Although beyond the scope of this paper, all of the presented results extend naturally to systems that are approximately localizable using a robust variant of the SLS parameterization described in \cite{matni_scalable_2017,anderson_system_2019}.
\end{remark}
 
We end by commenting briefly as to why previous methods \cite{goulart_optimization_2006,furieri_robust_2017} do not enjoy this feature.  We focus on the method defined in \cite{goulart_optimization_2006}, as a similar argument applies to the synthesis problem in \cite{furieri_robust_2017}.  Intuitively, the disturbance based feedback parameterization of \cite{goulart_optimization_2006} only parameterizes the closed loop map $\mathbf \Phi_u$ from $\mathbf w \to \mathbf u$, and leaves the state $\mathbf x$ as a free variable.  This can be made explicit by noticing that the disturbance feedback parameterization of \cite{goulart_optimization_2006} can be recovered from the SLS parameterization of Theorem \ref{thm: SLS} by multiplying the affine constraint \eqref{eq: constraint} by $\mathbf{w}$ on the right, and setting $\mathbf{x} = \mathbf \Phi_x \mathbf w$. Further, this immediately implies, by the result of \cite{wang2016localized}, that the SLS MPC subproblem \eqref{eq: SLS} is equivalent to the affine problem \eqref{eq: SLS MPC} when restricted to solving over linear-time-varying feedback policies. Setting $\mathbf{w} = [x_0^\top,0,\dots,0]^\top$, and relabeling the corresponding decision variables in the SLS MPC subproblem \eqref{eq: SLS} yields 
\begin{equation}\label{eq: goulart}
\begin{array}{rl}
\underset{\mathbf{x},\mathbf{\Phi}_{u}}{\text{min}} & f(\mathbf{x},\mathbf{\Phi}_{u}[0]x_{0})\\
 \text{s.t.} &  (I - Z\hat{A})\mathbf{x}=Z\hat{B}\mathbf{\Phi}_u[0] x_0 + E_1x_0,\,\\
 &x_0 = x(t),\, \mathbf{x}\in \mathcal{X}^T, \mathbf{\Phi}_{u}[0]x_{0} \in\mathcal{U}^T,
\end{array}
\end{equation}
where $E_1$ is a block-column matrix with first block element set to identity, and all others set to 0.  This is a special case of the optimization problem over disturbance feedback policies suggested in Section 4 of \cite{goulart_optimization_2006} with only a nonzero initial condition.  Notice that regardless as to what structure is imposed on the objective functions, constraints, and the map $\mathbf{\Phi}_u$, the resulting optimization problem is strongly and globally coupled by the affine constraint $(I - Z\hat{A})\mathbf{x}=Z\hat{B}\mathbf{\Phi}_u x_0 + E_1x_0$, because the state variable $\mathbf{x}$ is always dense.  A similar coupling arises in the Youla based parameterization suggested in \cite{furieri_robust_2017}.  In contrast, by explicitly parameterizing the additional system response $\mathbf{\Phi}_x$ from process noise to state, i.e., from $\mathbf{w}\to\mathbf{x}$, we can naturally enforce the structure needed for distributed optimization techniques to be fruitfully applied.  

\section{An ADMM based Distributed AND Localized Solution}\label{sec:admm}

We start with a brief overview of the ADMM algorithm, and then show how it can be used to decompose the DLMPC sub-problem (\ref{eq: SLS MPC}) into sub-problems that can be solved using only $d$-local information. For the sake of clarity, we introduce a simpler version of the algorithm first where only dynamical coupling is considered, which we then extend to the constraints and objective functions that introduce $d$-localized couplings, as defined in Assumption \ref{assump: locality}.

\subsection{The Alternating Direction Method of Multipliers}

The ADMM \cite{boyd_distributed_2010} algorithm has proved successful solving large-scale optimization problems that respect a certain partial-separability structure. In particular, given the following optimization problem
\begin{equation}\label{eq: optimization}
\begin{aligned}
& \underset{x,y}{\text{min}} &  &f(x)+g(y)\\
& \ \text{s.t.} &  &Ax+By=c,
\end{aligned}
\end{equation}
ADMM - in its scaled form - solves (\ref{eq: optimization}) with the following update rules:

\begin{equation}\label{eq: ADMM}
\begin{aligned}
& x^{k+1} = \arg\min_x f(x)+\frac{\rho}{2}\left\| Ax+By^{k}-c+z^{k}\right\|_{2}^{2}\\
& y^{k+1} = \arg\min_y g(y)+\frac{\rho}{2}\left\| Ax^{k+1}+By-c+z^{k}\right\|_{2}^{2}\\
& z^{k+1} = z^{k}+Ax^{k+1}+By^{k+1}-c.
\end{aligned}
\end{equation}
    
ADMM is particularly powerful when the iterate sub-problems defined in equation (\ref{eq: ADMM}) can be solved in closed form, which is the case in many practically relevant cases \cite{boyd_distributed_2010}, as this allows for rapid execution and convergence of the algorithm.  
Further, under mild assumptions ADMM enjoys strong and general convergence guarantees. 

\begin{theorem}\label{thm: ADMM}
Assume that extended real valued functions $f:\mathbb{R}^n\rightarrow \mathbb{R}\cup\left\{+\infty \right\}$ and $g:\mathbb{R}^m\rightarrow \mathbb{R}\cup\left\{+\infty \right\}$ are closed, proper, and convex. Moreover, assume that the unaugmented Lagrangian has a saddle point. Then, the ADMM iterates in equation (\ref{eq: ADMM}) satisfy the following:
\begin{itemize}
    \item \textbf{Residual convergence:} $r_{t}\rightarrow 0$ as $t\rightarrow \infty$, i.e. the iterates approach feasibility.
    \item \textbf{Objective convergence:} $f(x_{t})+g(z_{t})\rightarrow p^{*}$ as $t\rightarrow 0$, i.e. the objective function of the iterates approaches the optimal value.
    \item \textbf{Dual variable convergence:} $y_{t}\rightarrow y^{*}$ as $t\rightarrow \infty$, where $y^{*}$ is a dual optimal point.
\end{itemize}
\end{theorem}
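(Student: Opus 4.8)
The result is the standard convergence theorem for ADMM, so I would follow the Lyapunov-function argument of Boyd et al.~\cite{boyd_distributed_2010}. The plan is to fix a saddle point $(x^\star,y^\star,z^\star)$ of the (unaugmented) Lagrangian, whose existence is assumed; such a point satisfies primal feasibility $Ax^\star+By^\star=c$ and the optimality $p^\star=f(x^\star)+g(y^\star)=\min_{x,y}\{f(x)+g(y)+\rho (z^\star)^\top(Ax+By-c)\}$, where $z^k$ is the scaled dual iterate appearing in~\eqref{eq: ADMM}. I would then track the candidate Lyapunov function
\[
V^k=\rho\|z^k-z^\star\|_2^2+\rho\|B(y^k-y^\star)\|_2^2,
\]
showing it is monotonically non-increasing along the iterates. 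Throughout, write $r^{k}=Ax^{k}+By^{k}-c$ for the primal residual and $p^{k}=f(x^{k})+g(y^{k})$ for the objective value.

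First I would extract variational inequalities from the two minimization steps in~\eqref{eq: ADMM}. Since $x^{k+1}$ minimizes the first subproblem, its subgradient optimality condition, rewritten using the dual update $z^{k+1}=z^k+r^{k+1}$, yields that $x^{k+1}$ minimizes $f(x)+\rho(z^{k+1}-B(y^{k+1}-y^k))^\top Ax$; likewise $y^{k+1}$ minimizes $g(y)+\rho (z^{k+1})^\top By$. Evaluating these two inequalities at $x^\star$ and $y^\star$ respectively and adding them produces an \emph{upper} bound on $p^{k+1}-p^\star$ in terms of $r^{k+1}$ and the successive difference $B(y^{k+1}-y^k)$. The saddle-point property supplies the matching \emph{lower} bound $p^{k+1}-p^\star\ge -\rho (z^\star)^\top r^{k+1}$, obtained by using that $(x^\star,y^\star)$ minimizes the Lagrangian evaluated at $z^\star$. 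The closed/proper/convex hypotheses on $f,g$ are what guarantee these subdifferentials are well defined and the inequalities hold.

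The crux is to combine these two bounds and simplify, substituting $z^{k+1}-z^\star=(z^k-z^\star)+r^{k+1}$, into the clean descent inequality
\[
V^{k}-V^{k+1}\ \ge\ \rho\|r^{k+1}\|_2^2+\rho\|B(y^{k+1}-y^k)\|_2^2 .
\]
This algebraic manipulation -- absorbing all cross terms into the telescoping difference $V^k-V^{k+1}$ -- is the step I expect to be the main obstacle, since it is where the precise form of $V^k$ and the factor conventions must be matched exactly. Granting it, $V^k\ge0$ is non-increasing and hence convergent, and telescoping gives $\sum_{k\ge0}\big(\|r^{k+1}\|_2^2+\|B(y^{k+1}-y^k)\|_2^2\big)\le V^0/\rho<\infty$. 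Summability forces $r^k\to0$ (\emph{residual convergence}) and $B(y^{k+1}-y^k)\to0$. Feeding $r^{k}\to0$, $B(y^{k+1}-y^k)\to0$, and the boundedness of $z^k$ (immediate from $V^k\le V^0$) back into the upper and lower bounds from the previous step sandwiches $p^{k+1}-p^\star$ between two vanishing quantities, yielding \emph{objective convergence} $p^k\to p^\star$.

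For \emph{dual-variable convergence} I would argue that $V^k\le V^0$ keeps $(z^k,By^k)$ bounded, so some subsequence converges; using $r^k\to0$ together with the variational inequalities one checks that any such limit is itself a saddle point, i.e.\ a dual-optimal $z^\star$. Finally, because $V^k$ is defined relative to an \emph{arbitrary} saddle point and is monotone, re-running the descent inequality with this limit point as the reference shows the whole sequence $z^k$ converges to it, establishing $z^k\to z^\star$. Alternatively, the entire statement may simply be cited verbatim from~\cite{boyd_distributed_2010}.
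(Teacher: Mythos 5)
Your proposal is correct, and it is essentially the same argument the paper relies on: the paper states Theorem~\ref{thm: ADMM} without proof, citing the standard ADMM convergence result of Boyd et al.~\cite{boyd_distributed_2010}, and your Lyapunov-function reconstruction (with $V^k=\rho\|z^k-z^\star\|_2^2+\rho\|B(y^k-y^\star)\|_2^2$, the two variational inequalities from the subproblem optimality conditions, the saddle-point lower bound, the descent inequality, and the telescoping/Fej\'er argument for dual convergence) is precisely the proof given in Appendix~A of that reference. You also correctly adapt it to the paper's scaled-form notation, in which $z^k$ is the scaled dual variable and $y$ the second primal block --- indeed more carefully than the theorem statement itself, which swaps variable names relative to \eqref{eq: ADMM} and contains the typo ``$t\rightarrow 0$'' where $t\rightarrow\infty$ is meant.
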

    
Hence, we impose the following additional assumptions so as to ensure that Theorem \ref{thm: ADMM} is applicable to the considered problem.

\begin{assumption}\label{assump: feasible solution}
Problem (\ref{eq: SLS MPC}) has a feasible solution in the relative interior of $\mathcal{X}^T$ and $\mathcal{U}^T$.
\end{assumption} 
\begin{assumption}\label{assump: saddle point}
The constraint sets $\mathcal{X}^T$ and $\mathcal{U}^T$ in formulation (\ref{eq: SLS MPC}) are closed and convex. The objective function $f(\mathbf{\Phi}x_0)$ is a closed, proper, and convex function for all choices of $x_0\neq 0$.
\end{assumption}
            
\subsection{DLMPC algorithm}  

Now we present the proposed algorithm for the localized synthesis of the DLMPC problem (\ref{eq: SLS MPC}).

\subsubsection{DLMPC without Coupling Constraints}  

In this subsection we present here a simplified version of the algorithm that contains its main features: namely, we illustrate how to decompose the MPC subroutine (\ref{eq: SLS MPC}) by exploiting its separability. We begin by restricting ourselves to the case where neither the objective function nor the constraints introduce any coupling between subsystems. In the next subsection the general version of the algorithm will be introduced.
We show that the DLMPC problem (\ref{eq: SLS MPC}) can be decomposed into local subproblems that can be solved using only $d$-local information and system models.  In particular, consider the DLMPC subproblem solved at time $t$:
\begin{equation}\label{eq: MPC without ADMM}
\begin{array}{rl}
\underset{\mathbf{\Phi}}{\text{min}} & f(\mathbf{\Phi}x_0)\\
 \text{s.t.} &  Z_{AB}\mathbf{\Phi} = I,\ \mathbf{\Phi}x_0\in\mathcal{P},\ \mathbf{\Phi}\in\mathcal{L}_{d}, \, x_0 = x(t),
\end{array}
\end{equation}
where we let $Z_{AB}:=\left[I-Z\hat A\ \ -Z\hat B\right]$, $\mathbf{\Phi}:=[\mathbf{\Phi}_{x}^{\mathsf{T}}\ \mathbf{\Phi}_{u}^{\mathsf{T}}]^{\mathsf{T}}$, and $\mathbf{\Phi}x_0\in\mathcal{P}\iff\mathbf{\Phi}_{x}x_0 \in\mathcal{X}^T$\ and $\mathbf{\Phi}_{u}x_0\in\mathcal{U}^T$. Moreover, according to Assumption \ref{assump: locality}, $f$ can be written as $f(\mathbf{x},\mathbf{u})=\sum f_{i}([\mathbf{x}]_{i},[\mathbf{u}]_{i})$, or equivalently, that $f(\mathbf{\Phi}x_0)=\sum f_{i}(\mathbf{\Phi}(\mathfrak{r}_i,:)x_0)$ since $\begin{bmatrix} [\mathbf{x}]_{i} \\ [\mathbf{u}]_{i}  \end{bmatrix}=\mathbf{\Phi}(\mathfrak{r}_i,:)x_0$ where $\mathfrak{r}_i$ is the set of rows in $\mathbf{\Phi}$ corresponding to subsystem $i$. Similarly, the constraints must satisfy that $\mathbf{x}\in\mathcal{X} =\mathcal{X}_1\times ... \times \mathcal{X}_n$, where each $[\mathbf{x}]_{i}\in\mathcal{X}_{i}$, and idem for $\mathcal{U}$, implying that $\mathbf{\Phi}(\mathfrak{r}_i,:)x_0\in\mathcal{P}_{i}$. These notions are formalized through a slight modification of the definitions in \cite{wang_separable_2018}:

\begin{definition}
The functional $g(\mathbf{\Phi})$ is \textit{column-wise separable} with respect to the partition $\mathfrak{c}=\left\{\mathfrak{c}_{1}, . . . , \mathfrak{c}_{p}\right\}$ if it can be written as $g(\mathbf{\Phi})= \sum_{j=1}^{p} g_{j}(\mathbf{\Phi}(:,\mathfrak{c}_{j}))$ for some functionals $g_{j}$ for $j = 1,...,p$. Equivalently, $g(\mathbf{\Phi})$ is \textit{row-wise separable} with respect to the partition $\mathfrak{r}$ if it can be written as $g(\mathbf{\Phi})= \sum_{j=1}^{p} g_{j}(\mathbf{\Phi}(\mathfrak{r}_{j},:)$.
\end{definition}
\begin{definition} A constraint-set $\mathcal{P}$ is \textit{column-wise separable} with respect to the partition $\mathfrak{c}=\left\{\mathfrak{c}_{1}, . . . , \mathfrak{c}_{p}\right\}$ when $\mathbf{\Phi}\in\mathcal{P}\iff\mathbf{\Phi}(:,\mathfrak{c}_{j})\in\mathcal{P}_{j}\ \text{for } j=1,...,p$ is satisfied for some sets $\mathcal{P}_{j}$ for $j = 1,...,p$. 
Equivalently, $\mathcal{P}$ is \textit{row-wise separable} with respect to the partition $\mathfrak{r}$ if  $\mathbf{\Phi}\in\mathcal{P}\iff\mathbf{\Phi}(\mathfrak{r}_{j},:)\in\mathcal{P}_{j}\ \text{for } j=1,...,p$.
\end{definition}

If all objective functions and constraints in an optimization problem are column-wise (row-wise) separable with respect to a partition $\mathfrak{c}$ ($\mathfrak r$) of cardinality $p$, then the optimization problem trivially decomposes into $p$ independent subproblems.  However, while Assumption \ref{assump: locality} imposes that $f_{x_0}(\mathbf{\Phi})=f(\mathbf{\Phi}x_0)$ and $\mathcal{P}$ are row-wise separable in the optimization variable $\mathbf{\Phi}$, the achievability constraint $Z_{AB}\mathbf{\Phi} = I$ is column-wise separable in the optimization variable $\mathbf{\Phi}$.  As we show next, this partially-separable structure can be exploited within an ADMM based algorithm to reduce each ADMM iterate subproblem \eqref{eq: ADMM} to a row or column-wise separable optimization problem, allowing the algorithm to decompose and be solved at scale.
\begin{definition}{\label{partially separable}}
An optimization problem is partially separable if it can be written as
\begin{equation}\label{eq: partially separable}
 \underset{\mathbf{\Phi}}{\text{min }} g^{(r)}(\mathbf{\Phi})+g^{(c)}(\mathbf{\Phi}) \ \text{s.t. } 
     \mathbf{\Phi}\in\mathcal{S}^{(r)}\cap\mathcal{S}^{(c)},
\end{equation}
for row-wise separable $g^{(r)}$ and $\mathcal{S}^{(r)}$, and column-wise separable $g^{(c)}$ and $\mathcal{S}^{(c)}$.
\end{definition}

Since problem (\ref{eq: MPC without ADMM}) is partially separable, we reformulate the DLMPC subproblem \eqref{eq: SLS MPC} so that is of the form \eqref{eq: partially separable}:
\begin{equation} \label{eq: MPC duplicated variable}
\begin{aligned}
& \underset{\mathbf{\Phi},\mathbf{\Psi}}{\text{min}} && f(\mathbf{\Phi}x_0)\\
& \ \text{s.t.} &  &\begin{aligned} 
     &Z_{AB}\mathbf{\Psi} = I,\ \mathbf{\Phi}x_0\in\mathcal{P},\ \left\{\mathbf{\Phi},\mathbf{\Psi}\right\}\in\mathcal{L}_{d}, \, \mathbf{\Phi}=\mathbf{\Psi}.
\end{aligned}
\end{aligned}
\end{equation}
            
By duplicating the decision variable, we can decompose the DLMPC subproblem \eqref{eq: SLS MPC} into a column-wise separable iterate subproblem in $\mathbf{\Phi}$, and a row-wise seperable iterate subproblem in $\mathbf{\Psi}$ -- thus problem \eqref{eq: MPC duplicated variable} is partially separable, and is amenable to a distributed solution via ADMM:
\begin{subequations}\label{eq: MPC ADMM global 1}
\begin{align}
& \mathbf{\Phi}^{k+1} = 
\left\{\begin{aligned}
&\underset{\mathbf{\Phi}}{\text{argmin}} && f(\mathbf{\Phi}x_0)+\frac{\rho}{2}\left\Vert\mathbf{\Phi}-\mathbf{\Psi}^{k}+\mathbf{\Lambda}^{k}\right\Vert^{2}_{F}\\
&\text{s.t.} && \begin{aligned}
     &\mathbf{\Phi}x_0\in\mathcal{P},\ \mathbf{\Phi}\in\mathcal{L}_{d}\\
\end{aligned}
\end{aligned}\right\} \label{eq: MPC ADMM global 1 - row}
\\[5pt]
& \mathbf{\Psi}^{k+1} = 
\left\{\begin{aligned}
&\underset{\mathbf{\Psi}}{\text{argmin}} && \left\Vert\mathbf{\Phi}^{k+1}-\mathbf{\Psi}+\mathbf{\Lambda}^{k}\right\Vert^{2}_{F}\\
&\text{s.t.} && \begin{aligned}
     &Z_{AB}\mathbf{\Psi} = I,\ \mathbf{\Psi}\in\mathcal{L}_{d}\\
\end{aligned}
\end{aligned}\right\}\label{eq: MPC ADMM global 1 - column}
\\[5pt]
& \mathbf{\Lambda}^{k+1} = \mathbf{\Lambda}^{k}+\mathbf{\Phi}^{k+1}-\mathbf{\Psi}^{k+1}. \label{eq: MPC ADMM global 1 - lagrange}\end{align}
\end{subequations}            

The squared Frobenius norm is both row-wise and column-wise separable. Therefore, the resulting iterate subproblems in (\ref{eq: MPC ADMM global 1}) are separable: iterate subproblem (\ref{eq: MPC ADMM global 1 - row}) is row-wise separable with respect to the row partition $\mathfrak{r}$ induced by the subsystem-wise partitions of the state and control inputs, $[x]_i$ and $[u]_i$, iterate subproblem (\ref{eq: MPC ADMM global 1 - column}) is column-wise separable with respect to the column partition induced in a analogous manner, and iterate subproblem (\ref{eq: MPC ADMM global 1 - lagrange}) is component-wise separable. Hence, each of the iterate subproblems in (\ref{eq: MPC ADMM global 1}) can be decomposed into column, row, or element-wise subproblems that can solved independently and in parallel, with each sub-controller $i$ computing the solution to its component of the row or column-wise partition.

 Moreover, by enforcing that the system responses be $d$-localized, i.e., that $\mathbf{\Phi}_{x},\mathbf{\Phi}_{u}\in\mathcal{L}_d$, the resulting subproblem variables are sparse, allowing for a significant reduction in the dimension of the local subproblem.  For example, when considering the column-wise subproblem evaluated at 
 subsystem $j$, the $i$th row of the $j$th subsystem column partitions of $\mathbf{\Phi}_x(:,\mathfrak{c}_{j})$ and $\mathbf{\Phi}_u(:,\mathfrak{c}_{j})$) is nonzero only if  $i\in\cup_{k\in\textbf{out}_j(d)}\mathfrak{r}_k$ and $i\in\cup_{k\in\textbf{out}_j(d+1)}\mathfrak{r}_k$, respectively. 

In particular, sub-controller $i$ solves the subproblems:
 \begin{subequations}\label{eq: MPC ADMM localized 1}
\begin{align}
& [\mathbf{\Phi}]_{i_{r}}^{k+1} = \left\{
\begin{aligned}
&\underset{\mathbf{[\mathbf{\Phi}]}_{i_{r}}}{\text{argmin}} &&  f_{i}([\mathbf{\Phi}]_{i_{r}}[x_0]_{i_{r}})+\frac{\rho}{2}\left\Vert[\mathbf{\Phi}]_{i_{r}}-[\mathbf{\Psi}]_{i_{r}}^{k}+[\mathbf{\Lambda}]_{i_{r}}^{k}\right\Vert^{2}_{F}\\
&\text{s.t.} && \begin{aligned}
     &[\mathbf{\Phi}]_{i_{r}}[x_0]_{i_{r}}\in\mathcal{P}_i\\
\end{aligned}
\end{aligned}\right\} \label{eq: MPC ADMM localized 1 - row}
\\
& [\mathbf{\Psi}]_{i_{c}}^{k+1} = 
\left\{\begin{aligned}
&\underset{[\mathbf{\Psi}]_{i_{c}}}{\text{argmin}} &&  \left\Vert[\mathbf{\Phi}]_{i_{c}}^{k+1}-[\mathbf{\Psi}]_{i_{c}}+[\mathbf{\Lambda}]_{i_{c}}^{k}\right\Vert^{2}_{F}\\
&\text{s.t.} && \begin{aligned}
     &[Z_{AB}]_{i_{c}}[\mathbf{\Psi}]_{i_{c}} = [I]_{i_{c}} \\
\end{aligned}
\end{aligned}\right\}\label{eq: MPC ADMM localized 1 - column}
\\[10pt]
& [\mathbf{\Lambda}]_{i_{r}}^{k+1} =[\mathbf{\Lambda}]_{i_{r}}^{k}+[\mathbf{\Phi}]_{i_{r}}^{k+1}-[\mathbf{\Psi}]_{i_{r}}^{k+1}, \label{eq: MPC ADMM localized 1 - lagrange}\end{align}
\end{subequations} 
where to lighten notational burden, we let $[\mathbf{\Phi}]_{i_{r}}:=\mathbf{\Phi}(\mathfrak{s_{\mathfrak{r}_{i}}},\mathfrak{r}_{i})$, where the set $\mathfrak{r}_{i}$ represents the set of rows that the controller $i$ is solving for, and the set $\mathfrak{s_{\mathfrak{r}_{i}}}$ is the set of columns associated to the rows in $\mathfrak{r}_{i}$ by the locality constraints $\mathcal{L}_{d}$. 
An equivalent argument applies to
$[\mathbf{\Phi}]_{i_{c}}:=\mathbf{\Phi}(\mathfrak{c}_{i},\mathfrak{s_{\mathfrak{c}_{i}}})$ where the set $\mathfrak{c}_{i}$ represents the set of columns that the controller $i$ is solving for, and the corresponding set $\mathfrak{s_{\mathfrak{c}_{i}}}$ is the set of columns associated to the rows in $\mathfrak{c}_{i}$. For example, when considering the row-wise subproblem \eqref{eq: MPC ADMM localized 1 - row} evaluated at subsystem $i$, the $j$th column of the $i$th subsystem row partition $\mathbf{\Phi}_x(\mathfrak{r}_{i},:)$ and $\mathbf{\Phi}_u(\mathfrak{r}_{i},:)$ is nonzero only if  $j\in\textbf{in}_j(d)$ and $j\in\textbf{in}_j(d+1)$, respectively.  It follows that subsystem $i$ only requires a corresponding subset of the local sub-matrices $[A]_{k,\ell}, [B]_{k,\ell}$ to solve its respective subproblem. All column/row/matrix subsets described above can be found algorithmically (see Appendix A of \cite{wang_separable_2018}). 

\begin{remark}
Problem (\ref{eq: MPC ADMM localized 1 - column}) can be solved in closed form:
\begin{equation}{\label{eq: MPC ADMM localized 1 - proxi}}
[\mathbf{\Psi}]_{i_{c}}^{k+1} = \big([\mathbf{\Phi}]_{i_{c}}^{k+1}+[\mathbf{\Lambda}]_{i_{c}}^{k}\big)+[Z_{AB}]_{i_{c}}^{+}\Big([I]_{i_{c}} - [Z_{AB}]_{i_{c}}\big([\mathbf{\Phi}]_{i_{c}}^{k+1}+[\mathbf{\Lambda}]_{i_{c}}^{k}\big)\Big),
\end{equation}
where $[Z_{AB}]_{i_{c}}^{+}$ denotes the pseudo-inverse of $[Z_{AB}]_{i_{c}}$. This pseudo-inverse can be computed once off-line, reducing the evaluation of update step \eqref{eq: MPC ADMM localized 1 - proxi} to matrix multiplication.
\end{remark}{}

Notice that in general, in general $\mathfrak r_{i}\subset \mathfrak s_{\mathfrak c_{i}}$ and  $\mathfrak c_{i}\subset \mathfrak s_{\mathfrak r_{i}}$. Hence, each subsystem $i$ is computing updates for the sub-matrix $\mathbf{\Phi}(\mathfrak s_{\mathfrak r_{i}},c_{i})$ and the sub-matrix $\mathbf{\Phi}(\mathfrak r_i, s_{\mathfrak c_{i}})$ of the global system response variables $\mathbf{\Phi}$ and $\mathbf{\Psi}$. In particular, for subsystem $i$ to solve its local iterate subproblems (\ref{eq: MPC ADMM localized 1}), information sharing among subsystems is needed. However, as we impose $d$-locality constraints on the system responses, information only needs to be collected from d-hop neighbors. Similarly, only a $d$-local subset initial condition $x_0=x(t)$ is needed to solve the local iterate subproblems (\ref{eq: MPC ADMM localized 1}).

Algorithm \ref{alg: I} summarizes the implementation at subsystem $i$ of the ADMM based solution to the DLMPC subproblem (\ref{eq: SLS MPC}).  In the final step of Algorithm \ref{alg: I}, we let $[x_0]_{\mathfrak s_{\mathfrak{r_i}}}$ denote the subset of elements of $x_0$ associated with the columns in $\mathfrak s_{\mathfrak{r_i}}$, such that $[\Phi_u^{0,0}]_{i_{r}}x_0 =[\Phi_u^{0,0}]_{i_{r}}[x_0]_{\mathfrak s_{\mathfrak{r_i}}}$ \footnote{Recall that per in the notation section, $\Phi_u^{0,0}$ represents the upper-most left-most matrix in the block-diagonal operator $\mathbf{\Phi}_u$.}.  Algorithm 1 is run in parallel by each sub-controller, and makes clear that only $d$-local information and system models are needed to solve the ADMM iterate subproblems \eqref{eq: MPC ADMM localized 1} at subsystem $i$.

\setlength{\textfloatsep}{0pt}
\begin{algorithm}[ht]
\caption{Subsystem $i$ DLMPC implementation}\label{alg: I}
\begin{algorithmic}[1]
\State \textbf{input:} convergence tolerance parameters $\epsilon_p>0$, $\epsilon_d>0$
\State Measure local state $[x(t)]_{i}$.
\State Share the measurement with $\textbf{out}_{i}(d)$.
\State Solve optimization problem (\ref{eq: MPC ADMM localized 1 - row}).
\State Share $[\mathbf{\Phi}]_{i_{r}}^{k+1}$ with $\textbf{out}_{i}(d)$. Receive the corresponding $[\mathbf{\Phi}]_{j_{r}}^{k+1}$ from $\textbf{in}_{i}(d)$ and build $[\mathbf{\Phi}]_{i_{c}}^{k+1}$.
\State Solve optimization problem (\ref{eq: MPC ADMM localized 1 - column}) via the closed form solution (\ref{eq: MPC ADMM localized 1 - proxi}).
\State Share $[\mathbf{\Psi}]_{i_{c}}^{k+1}$ with $\textbf{out}_{i}(d)$. Receive the corresponding $[\mathbf{\Psi}]_{j_{c}}^{k+1}$ from $\textbf{in}_{i}(d)$ and build $[\mathbf{\Psi}]_{i_{r}}^{k+1}$.
\State Perform the multiplier update step (\ref{eq: MPC ADMM localized 1 - lagrange}).
\State Check convergence as $\left\Vert[\mathbf{\Phi}]_{i_{r}}^{k+1}-[\mathbf{\Psi}]_{i_{r}}^{k+1}\right\Vert_F\leq\epsilon_{p}$ and $\left\Vert[\mathbf{\Psi}]_{i_{r}}^{k+1}-[\mathbf{\Psi}]_{i_{r}}^{k}\right\Vert_F\leq\epsilon_{d}$. 
\State If converged, apply computed control action $[u_0]_i = [\Phi_u^{0,0}]_{i_{r}}[x_0]_{\mathfrak s_{\mathfrak{r_i}}}$, and return to 2, otherwise return to 4. 
\end{algorithmic}
\end{algorithm}

\subsubsection*{Computational complexity of the algorithm}         
The computational complexity of the algorithm is determined by update steps 4, 6 and 8. In particular, steps 6 and 8 can be directly solved in closed form, reducing their evaluation to the multiplication of matrices of dimension $O(d^{2}T)$. In certain cases, step 4 can also be computed in closed form if a proximal operator exists for the formulation. For instance this is true if it reduces to quadratic convex cost function subject to affine equality constraints. Regardless, each local iterate sub-problem is over $O(d^{2}T)$ optimization variables subject to $O(dT)$ constraints, leading to a significant computational saving when $d<<N$. The communication complexity - as determined by steps 3, 5 and 7 - is limited to the local exchange of information between $d$-local neighbors. 

\subsubsection*{Convergence of the algorithm}

One can show convergence by leveraging Theorem \ref{thm: ADMM}.

\begin{corollary}\label{corollary}
Algorithm \ref{alg: I} satisfies residual convergence, objective convergence and dual variable convergence as defined in Theorem \ref{thm: ADMM}.
\end{corollary}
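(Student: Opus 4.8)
The plan is to exhibit Algorithm \ref{alg: I} as a concrete instance of the scaled ADMM iteration \eqref{eq: ADMM} applied to the reformulated subproblem \eqref{eq: MPC duplicated variable}, and then invoke Theorem \ref{thm: ADMM} directly. First I would cast \eqref{eq: MPC duplicated variable} into the template \eqref{eq: optimization} by folding the constraints into the objective via indicator functions: take the first block variable to be $\mathbf{\Phi}$ with $f_{\Phi}(\mathbf{\Phi}) := f(\mathbf{\Phi}x_0) + \mathbb{I}_{\{\mathbf{\Phi}x_0\in\mathcal P\}}(\mathbf{\Phi}) + \mathbb{I}_{\mathcal L_d}(\mathbf{\Phi})$, the second block variable to be $\mathbf{\Psi}$ with $g_{\Psi}(\mathbf{\Psi}) := \mathbb{I}_{\{Z_{AB}\mathbf{\Psi}=I\}}(\mathbf{\Psi}) + \mathbb{I}_{\mathcal L_d}(\mathbf{\Psi})$, and the coupling constraint $\mathbf{\Phi}=\mathbf{\Psi}$, i.e.\ $A=I$, $B=-I$, $c=0$. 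Under this identification the three updates \eqref{eq: MPC ADMM global 1 - row}--\eqref{eq: MPC ADMM global 1 - lagrange} are exactly the scaled ADMM iterates \eqref{eq: ADMM}, with $\mathbf{\Lambda}$ playing the role of the scaled dual variable $z$.

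Second, I would verify the hypotheses of Theorem \ref{thm: ADMM}. Closedness, properness, and convexity of $f_\Phi$ and $g_\Psi$ follow from Assumption \ref{assump: saddle point}: the cost $f(\mathbf{\Phi}x_0)$ is closed, proper, and convex, and each indicator is that of a closed convex set -- the image constraint $\mathbf{\Phi}x_0\in\mathcal P$, the affine achievability subspace $\{Z_{AB}\mathbf{\Psi}=I\}$, and the locality subspace $\mathcal L_d$ -- so the two sums are again closed, proper, and convex, with properness guaranteed by nonemptiness of the feasible set. Existence of a saddle point of the unaugmented Lagrangian then follows from Assumption \ref{assump: feasible solution}: since there is a feasible point in the relative interior of $\mathcal X^T$ and $\mathcal U^T$, a Slater-type constraint qualification holds, strong duality obtains, and a primal--dual saddle point exists. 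With both hypotheses in place, Theorem \ref{thm: ADMM} yields residual, objective, and dual-variable convergence for the centralized iteration \eqref{eq: MPC ADMM global 1}.

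The remaining -- and most delicate -- step is to argue that the localized, per-subsystem iteration \eqref{eq: MPC ADMM localized 1} executed by Algorithm \ref{alg: I} produces exactly the same sequence $\{\mathbf{\Phi}^k,\mathbf{\Psi}^k,\mathbf{\Lambda}^k\}$ as the centralized iteration \eqref{eq: MPC ADMM global 1}, so that the convergence guarantee transfers verbatim. Here I would lean on the separability structure established earlier: by Assumption \ref{assump: locality} the cost $f(\mathbf{\Phi}x_0)$ and the set $\mathcal P$ are row-wise separable and the Frobenius penalty is entrywise (hence both row- and column-wise) separable, so the $\mathbf{\Phi}$-update decouples exactly into the independent row subproblems \eqref{eq: MPC ADMM localized 1 - row}; dually, the achievability constraint $Z_{AB}\mathbf{\Psi}=I$ is column-wise separable, so the $\mathbf{\Psi}$-update decouples exactly into the column subproblems \eqref{eq: MPC ADMM localized 1 - column}; and the multiplier update is element-wise separable. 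The $d$-locality constraint $\mathcal L_d$ only forces entries outside the $d$-hop neighborhoods to vanish, reducing each local subproblem to the sub-matrices $[\mathbf{\Phi}]_{i_r}$, $[\mathbf{\Psi}]_{i_c}$ without introducing any approximation, and the information-exchange steps of Algorithm \ref{alg: I} merely re-distribute the shared blocks between the row and column partitions. I expect this equivalence -- confirming that the distributed schedule is a lossless reindexing of \eqref{eq: MPC ADMM global 1} rather than an inexact or Gauss--Seidel-style approximation -- to be the crux of the argument; once it is established, Theorem \ref{thm: ADMM} applies unchanged and the three convergence statements follow.
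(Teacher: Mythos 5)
Your proposal is correct and follows essentially the same route as the paper's proof: recast problem \eqref{eq: MPC duplicated variable} as a standard two-block ADMM instance, verify that Theorem \ref{thm: ADMM}'s hypotheses hold via Assumption \ref{assump: saddle point} (closed, proper, convex objective and constraint sets) and Assumption \ref{assump: feasible solution} (Slater's condition, hence a saddle point of the unaugmented Lagrangian), and then transfer convergence from the global iteration \eqref{eq: MPC ADMM global 1} to Algorithm \ref{alg: I} because the localized iterates \eqref{eq: MPC ADMM localized 1} are an exact, lossless decomposition of the global ones. If anything, your version is slightly more explicit than the paper's, which folds all constraints into a single extended-real-valued functional $h(\mathbf{\Phi})$ and asserts the localized--global equivalence in one sentence, whereas you split the indicators across the two blocks to match the updates \eqref{eq: MPC ADMM global 1 - row}--\eqref{eq: MPC ADMM global 1 - lagrange} and spell out the separability argument underlying the equivalence.
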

\begin{proof}
Algorithm \ref{alg: I} is built upon algorithm (\ref{eq: MPC ADMM localized 1}), which is merely algorithm (\ref{eq: MPC ADMM global 1}) after exploiting locality. Thus to prove Corollary \ref{corollary} we only need to show that the ADMM algorithm (\ref{eq: MPC ADMM global 1}) satisfies the assumptions in Theorem \ref{thm: ADMM}. 

Define the extended-real-value functional $h(\mathbf{\Phi})$ by
\begin{equation*}
    h(\mathbf{\Phi})=\begin{cases}
    \mathbf{\Phi}x_0 &\text{if $Z_{AB}\mathbf{\Phi}=I,\ \mathbf{\Phi}x_0\in\mathcal{P},\ \mathbf{\Phi}\in\mathcal{L}_{d}$}\\ 
    \infty &\text{otherwise}. \end{cases}
\end{equation*}
 The constrained optimization in (\ref{eq: MPC duplicated variable}) can equivalently be written in terms of $h(\mathbf{\Phi})$ with the constraint $\mathbf{\Phi}=\mathbf{\Psi}$. 
\begin{equation*}
\begin{aligned}
& \underset{\mathbf{\Phi},\mathbf{\Psi}}{\text{min}} && h(\mathbf{\Phi}) \ \text{s.t.} &    &\mathbf{\Phi}=\mathbf{\Psi}.
\end{aligned}
\end{equation*}
Notice that by Assumption \ref{assump: saddle point}, $f(\mathbf{\Phi}x_0)$ is closed, proper, and convex, and $\mathcal{P}$ is a closed and convex set. Moreover, the remaining constraints $Z_{AB}\mathbf{\Phi}=I$ and $\mathbf{\Phi}\in\mathcal{L}_{d}$ are also closed and convex. Hence, $h(\mathbf{\Phi})$ is closed, proper, and convex. 
It only remains to show that the Lagrangian has a saddle point. This condition is equivalent to showing that strong duality holds \cite{boyd_convex_2004}. By Assumption \ref{assump: feasible solution}, Slater's condition is automatically satisfied, and therefore the Lagrangian of the problem has a saddle point.
Since both conditions of Theorem \ref{thm: ADMM} are satisfied, the ADMM algorithm in (\ref{eq: MPC ADMM global 1}) satisfies residual convergence, objective convergence and dual variable convergence as defined in Theorem \ref{thm: ADMM}. Since Algorithm \ref{alg: I} results from leveraging (\ref{eq: MPC ADMM global 1}), guaranteeing convergence of algorithm (\ref{eq: MPC ADMM global 1}) automatically guarantees convergence of Algorithm \ref{alg: I}.
\end{proof}

\subsubsection*{Recursive feasibility and stability}

In order to guarantee recursive feasibility, we can make the standard assumption that the horizon $N$ is chosen sufficiently long (Corollary 13.2, \cite{borrelli_predictive_2017}).  As the complexity of the subproblems now scales with the localized radius $d$, choosing a longer horizon $N$ no longer represents as substantial a computational burden.  Similarly, in order to guarantee stability, a simple sufficient condition is to set the terminal constraint $[x_N]_i=0$. Although this is a conservative sufficient condition, and we leave exploring the development of more principled terminal costs and constraint sets as well as integrating robustness to additive disturbance to future work, the ability of SLS to synthesize localized distributed controllers that define localized forward invariant sets satisfying state and input constraints \cite{chen_system_2019} via distributed optimization offers a promising avenue forward.

\subsubsection*{Communication dropouts}

In real-life applications, communication lost due to data losses or corrupted communications links can occur. Given that the presented algorithm relies on multiple exchanges of information between the subsystems, how communication lost affects the closed-loop performance of the algorithm is an interesting question. Although a formal analysis is left as future research, the work done in \cite{li_robust_2018} illustrates that it would be possible to slightly modify the proposed ADMM-based scheme to make it robust to unreliable communication links. Furthermore, extensions of ADMM to deal with data privacy and leakage when agents exchange information such as the ones carried out in \cite{ding_differentially_2019} can be directly applicable to our framework for sensitive-data applications. Lastly, the impact of the network topology in the convergence and performance of ADMM as studied in \cite{franca_how_2017} sets the ground for extending this work to time-changing topologies.

\subsubsection{DLMPC subject to localized coupling constraints}
Building on Algorithm \ref{alg: I}, we now show how DLMPC can be extended to allow for coupling between subsystems by the constraints and objective function, so long as the coupling is compatible with the $d$-localized constraints being imposed, i.e., so long as the objective function and constraints satisfy Assumption \ref{assump: locality}.
Consider the DLMPC sub-problem \eqref{eq: MPC without ADMM}, where the objective function and constraints satisfy the locality properties imposed by Assumption \ref{assump: locality}.  Due to this local coupling, the problem is no longer partially separable -- however, we may still rewrite it as
\begin{equation}\label{eq: MPC without ADMM - X trick}
\begin{aligned}
& \underset{\mathbf{X},\mathbf{\Phi}}{\text{min}} && f(\mathbf{X})\\
& \ \text{s.t.} &  &\begin{aligned} 
     &\mathbf{X}=\mathbf{\Phi}x_0,\ \\
     &Z_{AB}\mathbf{\Phi} = I \\
     &\mathbf{X}\in\mathcal{P},\ \mathbf{\Phi}\in\mathcal{L}_{d}.
\end{aligned}
\end{aligned}
\end{equation}

The first constraint is row-wise separable for $\mathbf{\Phi}$ while the second is column-wise separable in $\mathbf{\Phi}$. Applying the same variable duplication process as above and applying ADMM yields
\begin{subequations}\label{eq: MPC ADMM global 2}
\begin{align}
& [\mathbf{\Phi}^{k+1},\ \mathbf{X}^{k+1}] = 
\left\{\begin{aligned}
&\underset{\mathbf{\Phi},\mathbf{X}}{\text{argmin}} && f(\mathbf{X})+\frac{\rho}{2}\left\Vert\mathbf{\Phi}-\mathbf{\Psi}^{k}+\mathbf{\Lambda}^{k}\right\Vert^{2}_{F}\\
&\text{s.t.} && \begin{aligned}
     &\mathbf{X}=\mathbf{\Phi}x_0,\ \mathbf{X}\in\mathcal{P},\ \mathbf{\Phi}\in\mathcal{L}_{d}\\
\end{aligned}
\end{aligned}\right\} \label{eq: MPC ADMM global 2 - row}
\\[0pt]
& \mathbf{\Psi}^{k+1} = 
\left\{\begin{aligned}
&\underset{\mathbf{\Psi}}{\text{argmin}} && \left\Vert\mathbf{\Phi}^{k+1}-\mathbf{\Psi}+\mathbf{\Lambda}^{k}\right\Vert^{2}_{F}\\
&\text{s.t.} && \begin{aligned}
     &Z_{AB}\mathbf{\Psi} = I,\ \mathbf{\Psi}\in\mathcal{L}_{d}\\
\end{aligned}
\end{aligned}\right\}\label{eq: MPC ADMM global 2 - column}
\\[5pt]
& \mathbf{\Lambda}^{k+1} = \mathbf{\Lambda}^{k}+\mathbf{\Phi}^{k+1}-\mathbf{\Psi}^{k+1} \label{eq: MPC ADMM global 2 - lagrange}\end{align}
\end{subequations} 

While the iterate sub-problems \eqref{eq: MPC ADMM global 2 - column} and \eqref{eq: MPC ADMM global 2 - lagrange} enjoy column-wise and element-wise separability, iterate sub-problem \eqref{eq: MPC ADMM global 2 - row} is subject to local coupling due to the objective function $f$ and constraint $\mathbf X \in \mathcal P$.
In order to solve sub-problem iterate \eqref{eq: MPC ADMM global 2 - row} in a manner that respects the $d$-localized communication constraints, we propose an ADMM based consensus-like algorithm, similar to that used in \cite{costantini_decomposition_2018}. Hence, the solution to iterate sub-problem (\ref{eq: MPC ADMM global 2 - row}) is obtained by having each subsystem $i$ solve
\begin{subequations}\label{eq: MPC ADMM consensus}
\begin{align}
& [[\mathbf{\Phi}]_{i_{r}}^{k+1,n+1},\ [\mathbf{X}]_{i_{s}}^{n+1}] =
\left\{\begin{aligned}
& &&\underset{[\mathbf{\Phi}]_{i_{r}},[\mathbf{X}]_{i_{s}}}{\text{argmin}} && \begin{aligned} &f_{i}(\mathbf{X})+\frac{\rho}{2}\left\Vert[\mathbf{\Phi}]_{i_{r}}-[\mathbf{\Psi}]_{i_{r}}^{n}+[\mathbf{\Lambda}]_{i_{r}}^{n}\right\Vert^{2}_{F}\\
&+\frac{\mu}{2}\underset{j\in \textbf{in}_{i}(d)}{\sum}\left\Vert[\mathbf{X}]_{j}^{n+1}-[\mathbf{Z}]_{i}+[\mathbf{Y}]_{ij}^{n}\right\Vert^{2}_{F}\end{aligned}\\
&\ &&\ \text{s.t.} && \begin{aligned}
     &[\mathbf{X}]_{i}=[\mathbf{\Phi}]_{i_{r}}[x_0]_{i_{r}},\ [\mathbf{X}]_{i_{s}}\in\mathcal{P}_i\\
\end{aligned}
\end{aligned} \right\}\label{eq: MPC ADMM consensus 1}
\\[0pt]
&[\mathbf{Z}]_{i}^{n+1} = \frac{1}{\vert\textbf{in}_{i}(d)\vert}\underset{j\in \textbf{in}_{i}(d)}{\sum}\left\Vert[\mathbf{X}]_{j}^{n+1}+[\mathbf{Y}]_{ij}^{n}\right\Vert^{2}_{F}\label{eq: MPC ADMM consensus 2}
\\[5pt]
& \mathbf{[Y]}_{ij}^{n+1} = \mathbf{[Y]}_{ij}^{n}+\mathbf{[X]}_{i}^{n+1}-\mathbf{[Z]}_{j}^{n+1}, \label{eq: MPC ADMM consensus 3}\end{align}
\end{subequations} 
where the notation used is as in \eqref{eq: MPC ADMM localized 1}. In particular $[\mathbf{X}]_{i_{s}}$ is the concatenation of components $[\mathbf{X}]_j$ satisfying $j \in \mathbf{in}_{i}(d)$, whereas $[\mathbf{X}]_{i}$ is restricted to only those components of $[\mathbf X]_{i_s}$ needed by subsystem $i$ to compute $[\mathbf{\Phi}]_{i_{r}}x_0 =[\mathbf{\Phi}]_{i_{r}}[x_0]_{\mathfrak{s}_{\mathfrak{r}_i}}$.
After reaching consensus in equations (\ref{eq: MPC ADMM consensus}), one can use $[\mathbf{\Phi}]_{i_{r}}^{k+1}$ in algorithm (\ref{eq: MPC ADMM localized 1}). Therefore by solving iterate sub-problem (\ref{eq: MPC ADMM localized 1 - row}) using the ADMM based consensus-like updates (\ref{eq: MPC ADMM consensus}), we are able to accommodate $d$-local coupling introduced in the constraints and objective function while still only exchanging information with $d$-local neighbors.  The rest of the analysis follows just as in the previous subsection.

Algorithm \ref{alg: II} summarizes the general DLMPC algorithm as implemented at subsystem $i$. 

\begin{algorithm}[h]
\caption{Subsystem $i$ implementation of DLMPC general subject to localized coupling}\label{alg: II}
\begin{algorithmic}[1]
\State \textbf{input:} convergence tolerance parameters, $\epsilon_p$, $\epsilon_d$, $\epsilon_x >0$.
\State Measure local state $[x_0]_{i}$.
\State Share measurement with $\textbf{out}_{i}(d)$.
\State Solve optimization  problem (\ref{eq: MPC ADMM consensus 1}).
\State Share $[\mathbf{X}]_{i}^{n+1}$ with $\textbf{out}_{i}(d)$. Receive the corresponding $[\mathbf{X}]_{j}^{n+1}$ from $\textbf{in}_{i}(d)$.
\State Perform update (\ref{eq: MPC ADMM consensus 2}).
\State Share $[\mathbf{Z}]_{i}^{n+1}$ with $\textbf{out}_{i}(d)$. Receive the corresponding $[\mathbf{Z}]_{j}^{n+1}$ from $\textbf{in}_{i}(d)$.
\State Perform update (\ref{eq: MPC ADMM consensus 3}).
\State If $\left\|[\mathbf{X}]^{n+1}_i - [\mathbf{Z}]^{n+1}_i\right\|_F<\epsilon_x$ go to step 10, otherwise return to step 4.
\State Share $[\mathbf{\Phi}]_{i_{r}}^{k+1}$ with $\textbf{out}_{i}(d)$. Receive the corresponding $[\mathbf{\Phi}]_{j_{r}}^{k+1}$ from $\textbf{in}_{i}(d)$and build $[\mathbf{\Phi}]_{i_{c}}^{k+1}$.
\State Solve optimization problem (\ref{eq: MPC ADMM localized 1 - column}) via the closed form solution (\ref{eq: MPC ADMM localized 1 - proxi}).
\State Share $[\mathbf{\Psi}]_{i_{c}}^{k+1}$ with $\textbf{out}_{i}(d)$. Receive the corresponding $[\mathbf{\Psi}]_{j_{c}}^{k+1}$ from $\textbf{in}_{i}(d)$ and build $[\mathbf{\Psi}]_{i_{r}}^{k+1}$.
\State Perform the multiplier update (\ref{eq: MPC ADMM localized 1 - lagrange}).
\State Check convergence as $\left\Vert[\mathbf{\Phi}]_{i_{r}}^{k+1}-[\mathbf{\Psi}]_{i_{r}}^{k+1}\right\Vert_F\leq\epsilon_{p}$ and $\left\Vert[\mathbf{\Psi}]_{i_{r}}^{k+1}-[\mathbf{\Psi}]_{i_{r}}^{k}\right\Vert_F\leq\epsilon_{d}$.
\State If converged, apply computed control action $[u_0]_i = [\Phi_u^{0,0}]_{i_{r}}[x_0]_{\mathfrak s_{\mathfrak{r_i}}}$, 
and return to step 2, otherwise return to step 4.
\end{algorithmic}
\end{algorithm}

\begin{remark}
Notice that in problem (\ref{eq: MPC ADMM consensus}) each subsystem $i$ optimizes for $[\mathbf{X}]_{i_{s}}$, while in Algorithm \ref{alg: II} subsystems only exchange $[\mathbf{X}]_{i}$. This is standard in consensus algorithms where only the components of $\mathbf{X}$ corresponding to subsystem $i$, $[\mathbf{X}]_{i}$ needs to be exchanged. 
\end{remark}

\subsubsection*{Computational complexity and convergence guarantees}
The presence of the coupling inevitably results in an increase of the computation and communication complexity of the algorithm. The computational complexity of the algorithm is now determined by steps 4, 6, 8, 11 and 13. All of these except for step 4 can be solved in closed form. Once again, by Assumption \ref{assump: locality} all sub-problems are over $O(d^{2}T)$ optimization variables and $O(dT)$ constraints, so the complexity does not increase with the size of the network. There is however an increased computational burden due to the nested consensus-like algorithm used to solve iterate sub-problem \eqref{eq: MPC ADMM global 2 - row}, which leads to an increase in the number of iterations needed for convergence. This also results in increased communication between subsystems, as local information exchange is needed as part of the consensus-like step as well. However, once again this exchange is limited to within a $d$-local subset of the system, resulting in small consensus problems that converge quickly, as we illustrate empirically in the next section.

Since Algorithm \ref{alg: II} is identical to Algorithm \ref{alg: I} save for the approach to solving the first iterate sub-problem, convergence follows from a similar argument as that used to prove Corollary 2.1. The same argument for recursive feasibility and stability expressed in the previous subsection holds for Algorithm \ref{alg: II}.

\section{SIMULATION EXPERIMENTS} \label{simulation}

In this section we illustrate the benefits of DLMPC as applied to large-scale distributed systems. To do so, we consider two case-studies. In the first, we aim to illustrate how the proposed algorithm accounts for general convex coupling objective functions and constraints. In the second, we empirically characterize the computational complexity properties of DLMPC.  All code needed to replicate these experiments is available at \url{https://github.com/unstable-zeros/dl-mpc-sls}.

\subsection{Optimization features}

In this example we use a dynamical system consisting of a chain of four pendulums coupled through a spring ($1N/m$) and a damper ($3Ns/m$). Each pendulum is modeled as a two-state subsystem described by its angle $\theta$ and angular velocity $\dot{\theta}$. Each of the pendulums can actuate its velocity. The simulations are done with a prediction horizon of $T=10s$, and we impose a localized region of size $d=1$ subsystems. The initial condition is arbitrarily generated with MATLAB $\mathrm{rng}(2020)$.

In order to illustrate how the presented algorithm can accommodate local coupling between subsystems as introduced by the objective function and constraints, we begin with the following example. We compare the control performance achieved by solutions to the DLMPC sub-problem \eqref{eq: MPC without ADMM} computed as a centralized problem using the Gurobi solver \cite{noauthor_gurobi_nodate} and CVX interpreter \cite{grant_cvx:_2013}, \cite{grant_graph_2008} (dotted line), and using Algorithm \ref{alg: I} or \ref{alg: II} (solid line), as appropriate, in Figure \ref{fig: dynamics}.  In particular, we plot the evolution of the position of the first two pendulums under different control objectives and constraints. In scenario 1 we consider the quadratic cost $f(\mathbf x,\mathbf u) = \sum_{i=1}^4 \|[\mathbf x]_i\|_2^2+\|[\mathbf u]_i\|_2^2$, and have no additional constraints. In Scenario 2 we consider a quadratic cost coupling the angle of adjacent pendulums, i.e. the control objective is a sum of functionals of the form $f([\theta]_{i},[u]_{i})=([\theta]_{i}-\frac{1}{2}\sum[\theta]_{j})^{2}+[\dot{\theta}]_{i}^{2}+[u]_{i}^{2}$. Finally, Scenario 3 uses the same objective function as Scenario 2 and further constrains the maximal allowable deviation between subsystem angles, i.e., $\vert[\theta]_{i}-[\theta]_{j}\vert\leq0.05$, for all $t>2$. Once again, the centralized solution coincides with the solution achieved by either Algorithm \ref{alg: I} or \ref{alg: II}, again validating the optimality of the algorithms proposed. 

\begin{figure}[htp]
    \centering
     \includegraphics[width=12cm, page=4]{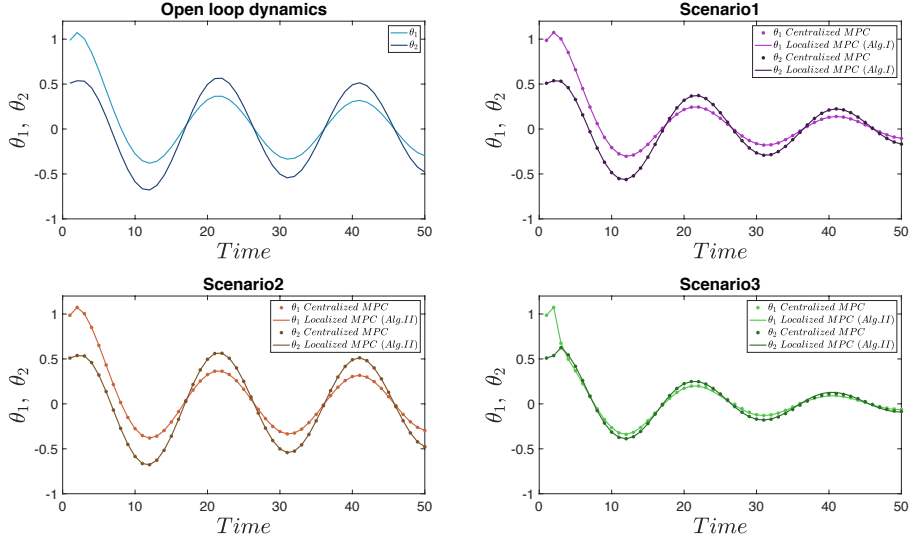}
    \caption{
    The evolution of the position of the first two pendulums in open loop is shown in the top left figure, whereas the top right most figure shows the position of the first two pendulums under MPC control with a quadratic penalty on state and input, and no constraints (scenario 1). The bottom left figure shows the position of the first two pendulums when the performance objective couples the angle of adjacent pendulums (scenario 2). On the bottom right, the position of the first two pendulums when the performance objective and the constraints couple adjacent pendulums (scenario 3).}
    \label{fig: dynamics}
\end{figure}

In order to further illustrate the impact of introducing a terminal constraint, we use Scenario 1 in Figure \ref{fig: dynamics} and we simulate the closed loop dynamics with the DLMPC controller both with and without terminal constraint $x_T=0$. As can be seen in Figure \ref{fig: dynamics1}, the system with terminal constraint reaches steady state faster at the expense of a higher optimal cost, specially at short time horizons. In the two closed loop scenarios presented in Figure \ref{fig: dynamics1}, we show both the control performance achieved by the solutions to the DLMPC subproblem \eqref{eq: MPC without ADMM} using Algorithm \ref{alg: I} and the solution to the standard MPC subproblem \eqref{eq: MPC} computed as a centralized problem using the Gurobi solver \cite{noauthor_gurobi_nodate} and CVX interpreter \cite{grant_cvx:_2013}. Again, in both cases the centralized solution coincides with the solution achieved by Algorithm \ref{alg: I}, so the solution achieved by Algorithm \ref{alg: I} is optimal.

\begin{figure}[htp]
\centering
\includegraphics[width=12cm, page=5]{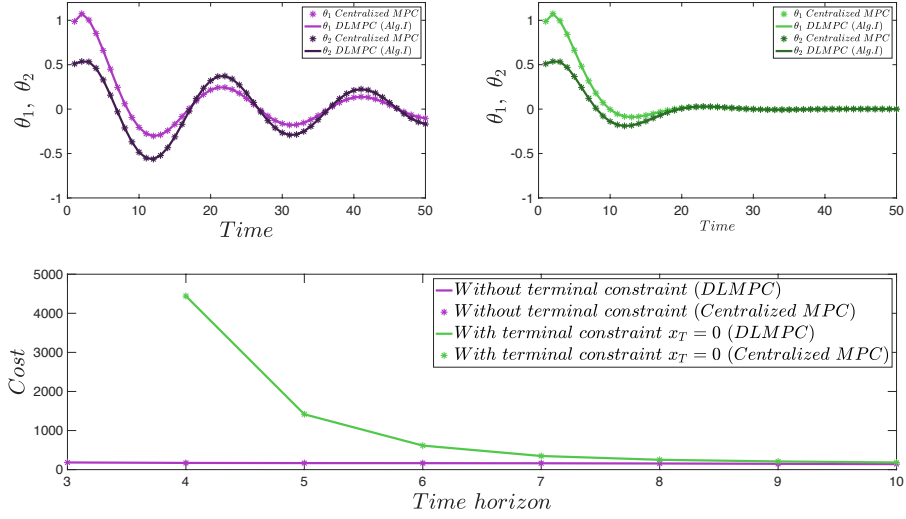}
\caption{
On the top from left to right: evolution of the position of the first two pendulums in open loop, with MPC control and no terminal constraint, and with MPC control with terminal constraint. Both cases under the quadratic cost $f(\mathbf x,\mathbf u) = \sum_{i=1}^4 \|[\mathbf x]_i\|_2^2+\|[\mathbf u]_i\|_2^2$ and $T=10$. On the bottom, the cost of both MPC setups with and without terminal constraints.}
\label{fig: dynamics1}
\end{figure}

\subsection{Per subsystem computational complexity}

Here we show how Algorithms \ref{alg: I} and \ref{alg: II} allow DLMPC to be applied to large-scale systems. Here, we let the subsystem dynamics be described by \[
[x(t+1)]_{i}=[A]_{ii}[x(t)]_{i}+\sum_{j\in\textbf{in}_{i}(d)}[A]_{ij}[x(t)]_{j}+[B]_{ii}[u(t)]_{i}\],
where 
\[[A]_{ii}=\begin{bmatrix}
   1  & 0.1 \\
  -0.3 &  0.7
\end{bmatrix}, \ [A]_{ij}=\begin{bmatrix}
   0  & 0 \\
   0.1 &  0.1
\end{bmatrix}, \ [B]_{ii}=\begin{bmatrix}
   0 \\
   0.1 
\end{bmatrix}.
\] 

The MPC horizon is $T=5$. We present four different scenarios that encompass different degrees of computational complexity of Algorithms \ref{alg: I} and \ref{alg: II}:

\begin{itemize}
    \item Case 1: per subsystem separable quadratic cost and no constraints. 
    \item Case 2:  per subsystem separable quadratic cost and per subsystem separable constraints.
    \item Case 3: quadratic cost coupling $d$-local subsystems and no constraints.
    \item Case 4: quadratic cost and polytopic constraints coupling $d$-local subsystems. 
\end{itemize}{}

The computational complexity of each of the cases is determined by (i) if the row-wise iteration sub-problem can be solved in closed form, and (ii) if Algorithm \ref{alg: I} or \ref{alg: II} is needed -- we summarize these properties for the four cases described above in Table \ref{tab: complexity}.

\begin{table}[h]
\centering
\begin{tabular}{ |c|c|c| } 
 \hline
 \hline
  Case & Algorithm & Computation of step 4 \\
 \hline
 \hline
  1 & \ref{alg: I} & Closed form \\
 \hline
  2 & \ref{alg: I} & Needs minimization solver\\
 \hline
  3  & \ref{alg: II} & Closed form \\
 \hline
  4  & \ref{alg: II} & Needs minimization solver\\
 \hline
\end{tabular}
\caption{Summary of cases considered in Section 5.2.}
\label{tab: complexity}
\end{table}

As in \cite{conte_computational_2012}, we characterize the runtime per MPC iteration of the DLMPC algorithms\footnote{Runtime is measured after the first iteration, so that all the iterations for which runtime is measured are warmstarted}. We measure runtime per state as opposed to per subsystem since the Algorithm presented performs iterations row-wise and column-wise, each corresponding to a state. In the leftmost plot of Figure \ref{fig: scalability}, we fix the locality parameter as $d=1$, and demonstrate that the runtime of both Algorithms \ref{alg: I} and \ref{alg: II} does not increase with the size of the network, assuming each subsystem is solving their sub-problems in parallel. These observations are consistent with those of \cite{conte_computational_2012} where the same trend was noted. The slight increase in runtime - in particular for Cases 3 and 4 - we conjecture is due to the introduced coupling, as the more subsystems are coupled together the longer it takes for the consensus-like sub-routine to converge. In any case, the increase in runtime does not seem to be significant and appears to level off for sufficiently large networks.

\begin{figure}[htp]
    \centering
    \includegraphics[width=12cm, page=6]{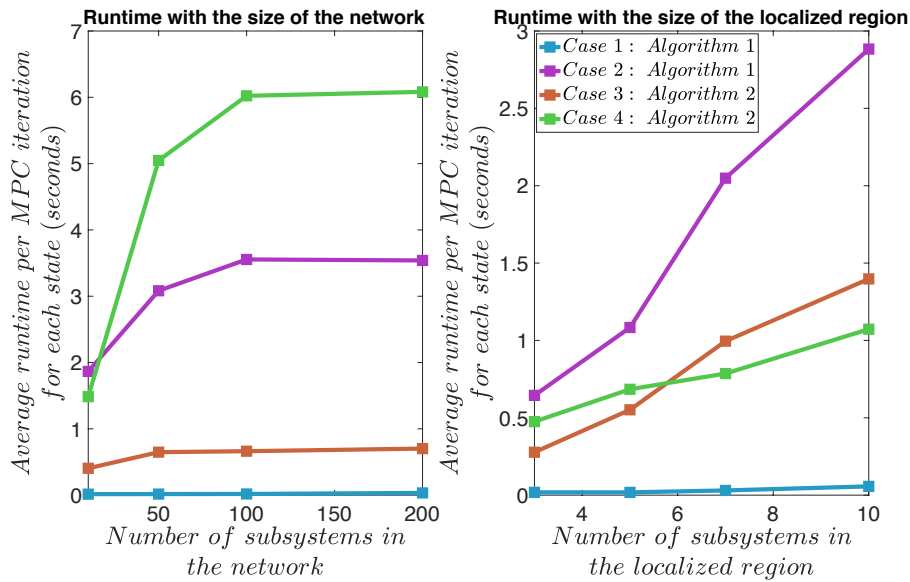}
   \caption{On the left, the runtime of each of the four different cases for different network sizes. On the right, the runtime of each of the four different cases for different sizes of the localized region.}
    \label{fig: scalability}
\end{figure}

In the rightmost plot of Figure \ref{fig: scalability}, we fix the number of systems to $N=10$, and explore the effect of the size of the localized region $d$ on computational complexity. While a larger localized region $d$ can lead to improved performance, as a broader set of subsystems can coordinate their actions directly, it also leads to an increase in computational complexity, as the number of optimization variables per sub-problem scales as $O(d^{2}T)$. Moreover, a larger localized region results in larger consensus-like problems being solved as a sub-routine in Algorithm \ref{alg: II}, further contributing to a larger runtime.  Thus by choosing the smallest localization parameter $d$ such that acceptable performance is achieved, the designer can tradeoff between computational complexity and closed loop performance in a principled way.  For the particular system studied in these simulations, performance did not change with the parameter $d$ and future work will look to understand this better. We conjecture that it will play an important role in the performance-complexity tradeoff when disturbances are present. This further highlights the importance of exploiting the underlying structure of the dynamics, which allow us to enforce locality constraints on the system responses, and consequently, on the controller implementation. \vspace*{3mm}

\section{CONCLUSIONS}

We defined and analyzed a \emph{closed loop} Distributed and Localized MPC algorithm. By leveraging the SLS framework, we were able to enforce information exchange constraints by imposing locality constraints on the system responses.  We further showed that when locality is combined with mild assumptions on the separability structure of the objective functions and constraints the problem, an ADMM based solution to the DLMPC subproblems can be implemented that requires only local information exchange and system models, making the approach suitable for large-scale distributed systems. Moreover, our approach can accommodate constraints and objective functions that introduce local coupling between subsystems through the use of a consensus-like algorithm. To be best of our knowledge, this is the first DMPC algorithm that allows for the distributed synthesis of closed loop policies. 

In future work, we plan to develop robust variants of DLMPC that can accommodate additive perturbations,  model uncertainty, and approximately localizable systems, by leveraging the robust variants of the SLS parameterization \cite{matni_scalable_2017}.  We will also explore whether locality constraints allow for a scalable computation of robust invariant sets for large-scale distributed systems, as well as their implications on the complexity of (approximate) explicit MPC approaches.  

Finally, it is of interest to extend the results presented in this paper to information exchange topologies defined in terms of both sparsity and delays -- while the SLS framework naturally allows for delay to be imposed on the implementation structure of a distributed controller, it is less clear how to incorporate such constraints in a distributed optimization scheme in a distributed optimization scheme.





\subsubsection*{Acknowledgement}
 The authors thank Manfred Morari for invaluable comments and helpful discussions.
\bibliographystyle{IEEEtran}
\bibliography{references}

\end{document}